\newtheorem{Def}{Definition}[section]
\newtheorem{lem}[Def]{Lemma}
\newtheorem{theo}[Def]{Theorem}
\newtheorem{rem}[Def]{Remark}
\newcommand{\ab}{a_{12}b_2-a_{22}b_1}
\newcommand{\tr}{\text{tr}}
\newcommand{\mcal}{\mathcal}
\newcommand{\DW}{\Delta W}
\newcommand{\mscr}{\mathscr}
\newcommand{\mbb}{\mathbb}
\newcommand{\mbf}{\mathbf}
\newcommand{\mrm}{\mathrm}
\newcommand{\ud}{\mathrm d}
\newcommand{\lam}{\Lambda(\lambda)}
\numberwithin{equation}{section}
\begin{document}

\title[Superiority of stochastic symplectic methods via LDPs]{The probabilistic superiority of stochastic symplectic methods  via large deviations principles}

\author{Chuchu Chen}
\address{Academy of Mathematics and Systems Science, Chinese Academy of Sciences, Beijing
	100190, China; School of Mathematical Sciences, University of Chinese Academy of
	Sciences, Beijing 100049, China}
\email{chenchuchu@lsec.cc.ac.cn}
\author{Jialin Hong}
\address{Academy of Mathematics and Systems Science, Chinese Academy of Sciences, Beijing
	100190, China; School of Mathematical Sciences, University of Chinese Academy of
	Sciences, Beijing 100049, China}
\email{hjl@lsec.cc.ac.cn}

\author{Diancong Jin}
\address{Academy of Mathematics and Systems Science, Chinese Academy of Sciences, Beijing
	100190, China; School of Mathematical Sciences, University of Chinese Academy of
	Sciences, Beijing 100049, China}
\email{diancongjin@lsec.cc.ac.cn (Corresponding author)}

\author{Liying Sun}
\address{Academy of Mathematics and Systems Science, Chinese Academy of Sciences, Beijing
	100190, China; School of Mathematical Sciences, University of Chinese Academy of
	Sciences, Beijing 100049, China}
\email{liyingsun@lsec.cc.ac.cn}

\thanks{This work is supported by National Natural Science Foundation of China (NO. 91630312, NO.11711530071, NO.11871068 and NO.11971470).}

\keywords{
symplectic methods; superiority; large deviations principle;  rate function; asymptotical preservation.
}

\begin{abstract}	
It is well known that symplectic methods have been rigorously shown  to be superior to non-symplectic ones especially in long-time computation, when applied to deterministic Hamiltonian systems. 
In this paper, we attempt to study the probabilistic superiority of stochastic symplectic methods by means of the theory of large deviations. We propose the concept of asymptotical preservation of numerical methods for large deviations principles associated with the exact solutions of the general stochastic Hamiltonian systems. 
Concerning that  the linear stochastic oscillator is one  of the typical stochastic Hamiltonian systems,  
we take it as the test equation in this paper 
to obtain precise results about the rate functions of large deviations principles for both exact and numerical solutions.  
Based on the G\"artner--Ellis theorem, we first study the large deviations principles of the mean position and the mean velocity for both the exact solution  
and its numerical approximations. Then, we prove that stochastic symplectic methods asymptotically preserve these two large deviations principles, but non-symplectic ones do not. This indicates that stochastic symplectic methods are able to approximate well the exponential decay speed of the ``hitting probability" of the mean position and mean velocity of the  stochastic oscillator.  To the best of our knowledge,  this is the first result about using large deviations principle  to show the superiority of stochastic symplectic methods  compared with non-symplectic ones in the existing literature.

\end{abstract}

\maketitle

AMS subject classifications: 60F10, 60H35, 65C30, 65P10

\section{Introduction}  \label{Sec1}
A $2d$-dimensional  stochastic differential equation (SDE) is called a stochastic Hamiltonian system, if it can be written in the form: 
\begin{align}\label{SHS}
\ud\left(
\begin{array}{c}
p\\q
\end{array}
\right)=J^{-1}\nabla H_0(p,q)\ud t+\sum_{r=1}^{m}J^{-1}\nabla H_r(p,q)\circ \ud W_r(t), \quad J=\begin{bmatrix}
0&I_d\\
-I_d&0
\end{bmatrix},
\end{align}
where $\circ$ denotes the Stratonovich product, $H_i$, $i=0,1,\ldots,m$ are  smooth Hamilton functions, and $W=(W_1,\ldots,W_m)$ is an $m$-dimensional Brownian motion on a given complete filtered probability space $(\Omega,\mcal F,\{\mcal F_t\}_{t\geq0},\mbf P)$. The phase flow of \eqref{SHS} preserves the symplectic structure in phase space, i.e., $d p(t)\wedge d q(t)=d p(0)\wedge d q(0)$, a.s., for all $t\geq 0$. In order to preserve the symplectic structure, a class of numerical methods called stochastic symplectic methods are proposed ( \cite{MilsteinBook}).
In recent years, stochastic symplectic methods have received extensive attention, and large quantities of numerical experiments show that stochastic symplectic methods
possess excellent long-time stability  (see e.g., \cite{CHRK,CHJ,CHZ,CHLZ,Anton2,HongW,Wang2014,WHS}).  
One approach to theoretically explaining the superiority of stochastic symplectic methods is
based on  the techniques of modified equations and backward error analysis (see \cite{CohenVil,Anton19,Huang,Faou,HSW,Tony,WHS,Zyg} and references therein).
Different from their approach, we try to use  large deviations principle (LDP) to give a probabilistic interpretation for the superiority of the stochastic symplectic methods in this paper.

The theory of large deviations is concerned with the exponential decay of probabilities of very rare events, which can be regarded as  an extension or refinement of the law of large numbers and central limit theorem. It is usually used to describe the asymptotical behaviour of stochastic processes for which  large deviations estimates are concerned. 
If a stochastic process $\{X_T\}_{T>0}$ satisfies an LDP with the rate function $I$, then the \emph{hitting probability} $\mbf P\left(X_T\in[a,a+\ud a]\right)$ decays exponentially, i.e., $e^{-TI(a)}\ud a$. 
The rate function characterizes
the fluctuations of the stochastic process $\{X_T\}_{T>0}$ in the long-time limit, and has a wide range of applications in engineering and physical sciences (see e.g., \cite{Adaptive}).
When a numerical method is applied to a given stochastic differential equation,
it is worthwhile to study whether the numerical method can preserve asymptotically the decay rate $e^{-TI}$.

Let $\{Z_T\}_{T>0}$ be a stochastic process associated with the exact solution of \eqref{SHS}, usually viewed as an observable of \eqref{SHS}.   For a numerical method $\{p_n,q_n\}_{n\geq 0}$ for \eqref{SHS}, let $\{Z_N\}_{N\geq1}$ be a discrete approximation of $\{Z_T\}_{T>0}$ associated with the numerical method $\{p_n,q_n\}_{n\geq 0}$. For example, one can take $Z_T=\frac{1}{T}\int_{0}^{T}f(p(t),q(t))\ud t$ as an observable of \eqref{SHS} for some smooth function $f$. Then $Z_N=\frac{1}{N}\sum_{n=0}^{N-1}f(p_n,q_n)$ can be viewed as a discrete version of $Z_T$.  If $\{Z_T\}_{T>0}$ satisfies an LDP with the rate function $I$,  two natural questions arise: 
\begin{itemize}
	\item[(Q1)] Does $\{Z_N\}_{N\geq1}$ satisfy the  LDP with some rate function $I^h$ for fixed step-size? 
	
	\item[(Q2)] If so,  could  $I^h$ approximate $I$ well for sufficiently small $h$?
\end{itemize}	
Concerning the above questions,  we give the following definition on the asymptotical or even exact preservation of LDP.
\begin{Def}\label{Def1.1}
	Let $\{Z_T\}_{T>0}$ be a stochastic process associated with the exact solution of \eqref{SHS}. Let $\{Z_N\}_{N\geq1}$ be a discrete approximation of $\{Z_T\}_{T>0}$, associated with some numerical method $\{p_n,q_n\}_{n\geq 0}$ for \eqref{SHS}.  Assume that $\{Z_T\}_{T>0}$ and $\{Z_N\}_{N\geq1}$ satisfy the LDPs on some polish space $E$ with the rate function $I$ and $I^h$, respectively. We call $I^h_{mod}:=I^h/h$ the modified rate function of $I^h$.
	Moreover, method $\{p_n,q_n\}_{n\geq 0}$ is said to asymptotically preserve the LDP of $\{Z_T\}_{T>0}$, if
	\begin{equation}\label{asym}
	\lim_{h\to 0}I^h_{mod}(y)=I(y),\qquad\forall\quad y\in E,
	\end{equation}
	In particular, method $\{p_n,q_n\}_{n\geq 0}$ is said to exactly preserve the LDP of $\{Z_T\}_{T>0}$, if for all sufficiently small step-size $h$, $I^h_{mod}(\cdot)=I(\cdot)$.
\end{Def}

The definition  of the modified rate function $I^h_{mod}:=I^h/h$  in Definition \ref{Def1.1} is to unify the observation scale. In fact, if LDPs  for both $\{Z_T\}_{T>0}$  and $\{Z_N\}_{N\geq1}$ hold, then $\{Z_T\}_{T>0}$ and $\{Z_N\}_{N\geq1}$ formally satisfy
\begin{gather}
\mbf P\left(Z_T\in[a,a+\ud a]\right)\approx e^{-TI(a)}\ud a, ~ \text{for sufficiently large} ~ T,\label{PZT} \\
\mbf P(Z_N\in[a,a+\ud a])\approx e^{-NI^h(a)}\ud a=e^{-t_NI^h_{mod}(a)}\ud a, ~ \text{for sufficiently large}~ t_N. \label{PZN}
\end{gather} 
With $T=t_N$ being the observation scale, it is reasonable to use the modified rate function to evaluate the ability of the numerical method to preserve the large deviations rate functions.
Concerning that  the linear stochastic oscillator is one  of the typical stochastic Hamiltonian systems,  
we take it as the test equation in this paper 
to obtain precise results about the rate functions of LDPs for both exact and numerical solutions. 
Based on the G\"artner--Ellis theorem, we first study the LDPs of the mean position and the mean velocity for both the exact solution of the linear stochastic oscillator and its numerical approximations. Then, by giving the conditions which make numerical methods have at least first order convergence in mean-square sense, we prove that stochastic symplectic methods asymptotically preserve these two LDPs. 
We would like to mention that the conclusion is valid for general stochastic symplectic methods, not only for some specific ones. However, it is shown that neither of two LDPs is preserved asymptotically by non-symplectic methods based on the tail estimation of Gaussian random variables. To the best of our knowledge,  this is the first result about using LDP  to show the superiority of stochastic symplectic methods  compared with non-symplectic ones. 

The paper is organized as follows. In Section \ref{Sec2}, we give some basic concepts about LDP and establish the LDPs for both $\{A_T\}_{T>0}$ and $\{B_T\}_{T>0}$ of the linear stochastic oscillator. Sections \ref{Sec3} and \ref{Sec4} study the LDP for $\{A_N\}_{N\geq1}$ of general numerical methods, and show that symplectic methods asymptotically preserve the LDP for $\{A_T\}_{T>0}$. In Section \ref{Sec5}, by following the ideas of dealing with $\{A_N\}_{N\geq1}$, we investigate the LDP for $\{B_N\}_{N\geq1}$ and show that symplectic methods asymptotically preserve the LDP for $\{B_T\}_{T>0}$. In Section \ref{Sec6}, we verify our theoretical results by discussing about some concrete numerical methods, and construct some methods preserving exactly the LDPs for $\{A_T\}_{T>0}$ or $\{B_T\}_{T>0}$. These imply  the superiority of symplectic methods in preserving the LDPs for $\{A_T\}_{T>0}$ and $\{B_T\}_{T>0}$ of the linear stochastic oscillator. Finally, in Section \ref{Sec7}, we give our conclusions and propose several open problems for future study.

\section{LDPs for $\{A_T\}_{T>0}$ and $\{B_T\}_{T>0}$}\label{Sec2} 
In this section, we aim to prove that both the mean position $\{A_T\}_{T>0}$ and mean velocity $\{B_T\}_{T>0}$ of  the exact solution of our considered stochastic oscillator satisfy the LDPs. Before showing the LDPs of $\{A_T\}_{T>0}$ and $\{B_T\}_{T>0}$, we introduce some preliminaries upon the theory of large deviations, which can be found in \cite{Dembo,Achim}.

\begin{Def}\label{ratefun}
	$I:E\rightarrow[0,\infty]$ is called a rate function, if it is lower semicontinuous, where $E$ is a Polish space, i.e., complete and separable metric space. If all level sets $I^{-1}([-\infty,a])$, $a\in[0,\infty)$, are compact, then $I$ is called a good rate function.
\end{Def}
\begin{Def}\label{LDP}
	Let $I$ be a rate function and $(\mu_\epsilon)_{\epsilon>0}$ be a family of probability measures on $E$. We say that $(\mu_\epsilon)_{\epsilon>0}$ satisfies a large deviations principle (LDP) with rate function $I$ if
	\begin{flalign}
	(\rm{LDP 1})\qquad \qquad&\liminf_{\epsilon\to 0}\epsilon\log(\mu_\epsilon(U))\geq-\inf I(U)\qquad\text{for every open}~ U\subset E,\nonumber&\\
	(\rm{LDP 2})\qquad\qquad &\limsup_{\epsilon\to 0}\epsilon\log(\mu_\epsilon(C))\leq-\inf I(C)\qquad\text{for every closed}~ C\subset E.&\nonumber
	\end{flalign}
\end{Def}
Based on Definition \ref{LDP}, one can give the definition of LDP for a family of random variables similarly. Namely, let $\{X_\epsilon\}_{\epsilon>0}$ $\big(\text{resp.} ~\{X_n\}_{n\in\mbb N}\big)$ be a family of random variables from $\left(\Omega,\mathscr{F},\mbf P\right)$ to $(E,\mscr{B}(E))$. $\{X_\epsilon\}_{\epsilon>0}$ $\big(\text{resp.}~\{X_n\}_{n\in\mbb N}\big)$ is said to satisfy an LDP with the rate function $I$, if its distribution law $(\mbf P\circ X_\epsilon^{-1})_{\epsilon>0}$ $\big(\text{resp.}~(\mbf P\circ X_n^{-1})_{n\in\mbb N}\big)$ satisfies  (LDP$1$) and (LDP$2$) in Definition \ref{LDP}.  (see e.g., \cite{ChenX,Dembo})

The G\"artner--Ellis theorem plays an important role in dealing with the LDPs for a family of not independent random variables. When utilizing this theorem, one needs to examine whether the logarithmic moment generating function is essentially smooth.

\begin{Def} \label{Def1.4}
	A convex function  $\Lambda:\mbb R^d\to(-\infty,\infty]$ is essentially smooth if:
	
	$\mrm{(1)}$ $\mcal D_\Lambda^\circ$ is non-empty, where  $\mcal D_\Lambda^\circ$ is the interior of $\mcal D_\Lambda:=\{x\in\mbb R^d:\Lambda(x)<\infty\}$;
	
	$\mrm{(2)}$ $\Lambda(\cdot)$ is differentiable throughout $\mcal D_\Lambda^\circ$;
	
	$\mrm{(3)}$ $\Lambda(\cdot)$ is steep, namely, $\lim_{n\to\infty}\left|\nabla\Lambda(\lambda_n)\right|=\infty$ whenever $\{\lambda_n\}$ is a sequence in  $\mcal D_\Lambda^\circ$ converging to a boundary point of $\mcal D_\Lambda^\circ$. 
\end{Def}
\begin{theo}[G\"artner--Ellis]\label{G-E}
	Let $\{X_n\}_{n\in\mbb N}$ be a sequence of random vectors taking values in $\mbb R^d$. Assume that for each $\lambda\in \mbb R^d$, the logarithmic moment generating function, defined as the limit 
	$\Lambda(\lambda)\triangleq\lim_{n\to\infty}\frac{1}{n}\log\left(\mbf Ee^{n\langle\lambda,X_n\rangle}\right)$
	exists as an extended real number. Further, assume that the origin belongs to $\mcal D_\Lambda^\circ$.
	If  $\Lambda$ is an essentially smooth and lower semicontinuous function, then the LDP holds for $\{X_n\}_{n\in\mbb N}$ with the good rate function $\Lambda^*(\cdot)$. Here $\Lambda^*(x)=\underset{\lambda\in\mbb R^d}{\sup}\{\langle\lambda,x\rangle-\Lambda(\lambda)\}$, $x\in\mbb R^d$, is the Fenchel--Legendre transform of $\Lambda(\cdot)$.	
\end{theo}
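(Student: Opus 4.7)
The plan is to prove Theorem \ref{G-E} by the classical two-step programme: first establish the upper bound (LDP2), using only convexity of $\Lambda$ and the assumption $0\in\mcal D_\Lambda^\circ$, and then the lower bound (LDP1), which is where essential smoothness does the real work. The upper bound will come from exponential Chebyshev plus a tightness argument; the lower bound is handled by an exponential change of measure combined with convex-duality identification of exposed points.

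For (LDP2), I would start with the exponential Chebyshev bound
\begin{equation*}
\mbf P(X_n\in C)\leq e^{-n\inf_{x\in C}\langle\lambda,x\rangle}\,\mbf E\bigl[e^{n\langle\lambda,X_n\rangle}\bigr],\qquad\lambda\in\mbb R^d,
\end{equation*}
take logarithms, divide by $n$, and pass to $\limsup$. Optimising $\lambda$ locally around each point of a compact set $K$, covering $K$ by finitely many balls, and using Fenchel duality yield the compact-set upper bound $\limsup\frac{1}{n}\log\mbf P(X_n\in K)\leq-\inf_{x\in K}\Lambda^*(x)$. To extend to closed sets I would establish exponential tightness: since $0$ lies in the interior of $\mcal D_\Lambda$, there exists $\rho>0$ with $\Lambda(\pm\rho e_i)<\infty$ for each standard basis vector $e_i$, and Chebyshev then forces $X_n$ to concentrate inside a compact box at exponential speed, giving closed-set tightness.

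For (LDP1) the central estimate is that at every exposed point $x$ of $\Lambda^*$, with exposing hyperplane $\lambda\in\mcal D_\Lambda^\circ$,
\begin{equation*}
\liminf_{n\to\infty}\frac{1}{n}\log\mbf P\bigl(X_n\in B(x,\delta)\bigr)\geq -\Lambda^*(x)\quad\text{for every }\delta>0.
\end{equation*}
The device is the tilted measure $\tilde{\mbf P}_n(A):=\mbf E[\mbf 1_A(X_n)e^{n\langle\lambda,X_n\rangle}]/\mbf E[e^{n\langle\lambda,X_n\rangle}]$. Rewriting $\mbf P(X_n\in B(x,\delta))$ through $\tilde{\mbf P}_n$ factors out $e^{-n\Lambda^*(x)+o(n)}$ and leaves the tilted probability of $B(x,\delta)$, which I argue tends to $1$. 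Essential smoothness enters twice: differentiability on $\mcal D_\Lambda^\circ$ guarantees that each $\lambda\in\mcal D_\Lambda^\circ$ exposes $x=\nabla\Lambda(\lambda)$, while steepness ensures $\nabla\Lambda(\mcal D_\Lambda^\circ)$ fills out the relative interior of $\{\Lambda^*<\infty\}$, so that any such $x$ is indeed exposed. Passing from exposed-point estimates to arbitrary open sets uses convexity of $\Lambda^*$ together with a standard approximation.

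The hard part will be the concentration step $\tilde{\mbf P}_n(X_n\in B(x,\delta))\to 1$. Without any i.i.d.\ structure I cannot invoke a law of large numbers, so I would bootstrap by applying the already-proven upper bound to the tilted sequence, whose logarithmic moment generating function is the translate $\mu\mapsto\Lambda(\lambda+\mu)-\Lambda(\lambda)$; its Fenchel--Legendre transform is nonnegative and vanishes only at $x=\nabla\Lambda(\lambda)$, which drives $\tilde{\mbf P}_n(X_n\notin B(x,\delta))$ to zero exponentially. The remaining bookkeeping, and the real use of steepness, lies in handling boundary points of $\{\Lambda^*<\infty\}$ that are approximated from within by exposed points but are not themselves exposed.
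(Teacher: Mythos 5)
The paper does not prove Theorem \ref{G-E}; it quotes it as a classical result from \cite{Dembo} (it is Theorem 2.3.6 there), so there is no in-paper proof to compare against. Your sketch is a faithful outline of the standard Dembo--Zeitouni argument: exponential Chebyshev plus a covering argument for the compact-set upper bound, exponential tightness from $0\in\mcal D_\Lambda^\circ$ to pass to closed sets, exponential tilting at exposed points with the bootstrapped upper bound driving the tilted probability of a small ball to one, and essential smoothness (differentiability plus steepness) to guarantee that the exposed points with exposing hyperplane in $\mcal D_\Lambda^\circ$ are dense enough to recover the lower bound on arbitrary open sets. The identification of the tilted log-MGF as $\mu\mapsto\Lambda(\lambda+\mu)-\Lambda(\lambda)$ and the observation that its Fenchel--Legendre transform is a good rate function vanishing uniquely at $\nabla\Lambda(\lambda)$ are exactly the mechanism used in the cited reference, and your remark that the real work of steepness is in handling non-exposed boundary points of $\{\Lambda^*<\infty\}$ correctly locates the delicate step. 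No gaps beyond the admitted bookkeeping.
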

It is known that the key point of the G\"artner--Ellis theorem is to study the  logarithmic moment generating function. Moreover, we would like to mention that the G\"artner--Ellis theorem is valid in the case of continuous parameter family  $\{X_\epsilon\}_{\epsilon>0}$ (see the remarks of \cite[Theorem 2.3.6]{Dembo}). 

The motivation of this paper is  to explain the superiority of stochastic symplectic methods, by studying the LDPs of numerical methods for a linear stochastic oscillator $
\ddot{X}_t+X_t=\alpha \dot{W}_t$
with $\alpha > 0,$ and $W_t$ being a $1$-dimensional standard Brownian motion defined on a complete filtered probability space $\left(\Omega,\mathscr{F},\{\mathscr{F}_t\}_{t\geq0},\mbf P\right)$. 
The linear stochastic oscillator can be rewritten as a $2$-dimensional stochastic Hamiltonian system
\begin{equation}\label{Eq1}
\ud \left(
\begin{array}{c}
X_t \\
Y_t
\end{array}
\right)=\left(
\begin{array}{cc}
0 & 1\\
-1& 0
\end{array}
\right)
\left(	\begin{array}{c}
X_t \\
Y_t
\end{array}\right)\ud t+\alpha
\left(\begin{array}{c}
0 \\
1
\end{array}\right)\ud W_t,\quad
\left(
\begin{array}{c}
X_0 \\
Y_0
\end{array}
\right)
=
\left(
\begin{array}{c}
x_0 \\
y_0
\end{array}
\right),
\end{equation}
whose phase flow preserves symplectic structure.
Namely, the oriented areas of the projection of the phase flow are invariant:
\begin{equation*}
\ud X_t\wedge\ud Y_t=\ud x_0\wedge\ud y_0, \qquad\forall\quad t\geq 0,
\end{equation*}
where the exact solution $\left(X_t,Y_t\right)$ of \eqref{Eq1} (see \cite[Chapter 8]{Mxr}) is
\begin{align}
X_t=&\phantom{-}x_0\cos(t)+y_0\sin(t)+\alpha \int_{0}^{t}\sin(t-s)\ud W_s, \label{Xt}\\
Y_t=&-x_0\sin(t)+y_0\cos(t)+\alpha \int_{0}^{t}\cos(t-s)\ud W_s.  \nonumber
\end{align}
To inherit the symplecticity of this stochastic oscillator, different kinds of symplectic methods have been constructed  (see \cite{Cohen,Tocino15} and references therein).

For SDE \eqref{Eq1}, we introduce the so-called mean position 
\begin{equation} \label{AT}
A_T=\frac{1}{T}\int_{0}^{T}X_t\,\ud t, \qquad \forall\quad T>0,
\end{equation} 
and the  mean velocity:
\begin{equation} \label{BT}
B_T=\frac{X_T}{T}, \qquad \forall\quad T>0.
\end{equation}
Both $A_T$ and $B_T$ are important observables, and they have many applications in physics. For example, 
the Ornstein--Uhlenbeck process is often used to describe the velocity of a particle moving in a random environment (\cite{OU}). In this case, $A_T$ can be interpreted as the mean value of the displacement process $\int_{0}^{T}X_t\,\ud t$, and $B_T$ as the mean value of velocity $X_t$ on the time interval $[0,T]$ (see also \cite{Adaptive}).
Next, by means of the G\"artner--Ellis theorem,  we show that  both the mean position $\{A_T\}_{T>0}$ and mean velocity $\{B_T\}_{T>0}$ of the exact solution satisfy the LDPs.
\begin{theo}\label{LDP for AT}
	$\{A_T\}_{T>0}$ satisfies an LDP with the good rate function 	$I(y)=\frac{y^2}{3\alpha^2}$, i.e., 
	\begin{eqnarray}
	\underset{T\to \infty}{\liminf}~\frac{1}{T}\log(\mbf P(A_T\in U))&\geq-\underset{y\in U}{\inf}I(y)\qquad&\text{for every open}~ U\subset \mbb R,\nonumber\\
	\underset{T\to \infty}{\limsup}~\frac{1}{T}\log(\mbf P(A_T\in C))&\leq-\underset{y\in C}{\inf}I(y)\qquad&\text{for every closed}~ C\subset \mbb R.\nonumber
	\end{eqnarray}
\end{theo}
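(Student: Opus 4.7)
The plan is to invoke the Gärtner--Ellis theorem in its continuous-parameter form (Theorem \ref{G-E}), applied with $n$ replaced by $T$. Since $A_T$ is an affine functional of a Gaussian process, the logarithmic moment generating function should be computable in closed form, after which the LDP drops out of a routine Fenchel--Legendre transform.

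Concretely, I would first observe that $TA_T=\int_0^T X_t\,\ud t$ is a Gaussian random variable, because $X_t$ from \eqref{Xt} is an affine Gaussian process. Its mean is $\int_0^T[x_0\cos t+y_0\sin t]\,\ud t$, which is bounded uniformly in $T$ and will therefore not contribute to $\Lambda(\lambda)$. For the variance, I would apply the stochastic Fubini theorem to the double integral $\int_0^T\int_0^t\sin(t-s)\,\ud W_s\,\ud t$, yielding
\begin{equation*}
\int_0^T X_t\,\ud t=\text{(bounded deterministic term)}+\alpha\int_0^T\bigl(1-\cos(T-s)\bigr)\,\ud W_s,
\end{equation*}
so by Itô's isometry,
\begin{equation*}
\mrm{Var}\!\left(\int_0^TX_t\,\ud t\right)=\alpha^2\int_0^T\bigl(1-\cos(T-s)\bigr)^2\,\ud s=\frac{3\alpha^2 T}{2}-2\alpha^2\sin T+\frac{\alpha^2}{4}\sin(2T).
\end{equation*}
Using $\mbf E e^{\xi}=\exp(\mbf E\xi+\tfrac12\mrm{Var}\,\xi)$ for a Gaussian $\xi$ then gives
\begin{equation*}
\Lambda(\lambda)=\lim_{T\to\infty}\frac{1}{T}\log\mbf E\bigl[e^{T\lambda A_T}\bigr]=\frac{3\alpha^2\lambda^2}{4},\qquad\lambda\in\mbb R.
\end{equation*}

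With $\Lambda$ in hand, the remaining verifications are immediate: $\mcal D_\Lambda=\mbb R$ so $0\in\mcal D_\Lambda^\circ$, $\Lambda$ is smooth and convex on all of $\mbb R$, and steepness is vacuous because there is no finite boundary point. Hence $\Lambda$ is essentially smooth and lower semicontinuous, so Theorem \ref{G-E} applies and $\{A_T\}_{T>0}$ satisfies an LDP with good rate function $\Lambda^*$. A one-variable optimization $\Lambda^*(y)=\sup_{\lambda\in\mbb R}\bigl\{\lambda y-\tfrac{3\alpha^2\lambda^2}{4}\bigr\}$, attained at $\lambda=\tfrac{2y}{3\alpha^2}$, gives $\Lambda^*(y)=\tfrac{y^2}{3\alpha^2}=I(y)$.

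The main obstacle, though a mild one, is the variance asymptotics: one must apply stochastic Fubini correctly and then identify the leading $\tfrac{3\alpha^2}{2}T$ behaviour while showing the oscillatory $\sin T$ and $\sin(2T)$ corrections are negligible after division by $T$. Beyond this, confirming that the continuous-parameter version of Gärtner--Ellis quoted after Theorem \ref{G-E} indeed yields the two LDP inequalities stated in the theorem (with $\epsilon=1/T$) is bookkeeping rather than substantive work.
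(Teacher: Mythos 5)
Your proposal is correct and follows exactly the same route as the paper's own proof: apply stochastic Fubini to write $TA_T$ as a bounded deterministic term plus $\alpha\int_0^T(1-\cos(T-s))\,\ud W_s$, compute the variance $\alpha^2[\tfrac{3T}{2}-2\sin T+\tfrac{1}{4}\sin(2T)]$, deduce $\Lambda(\lambda)=\tfrac{3\alpha^2}{4}\lambda^2$, verify essential smoothness, and invoke the continuous-parameter Gärtner--Ellis theorem to get $I(y)=\Lambda^*(y)=\tfrac{y^2}{3\alpha^2}$. No substantive differences.
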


\begin{proof}
	It follows from \eqref{Xt},  \eqref{AT} and the stochastic Fubini theorem that
	\begin{align}
	TA_T=\int_{0}^{T}X_t\ud t
	=x_0\sin(T)+y_0(1-\cos(T))+\alpha\int_{0}^{T}\left[1-\cos(T-s)\right]\ud W_s.
	\end{align}
	Thus, we have
	$\mathbf E\left[TA_T\right]=x_0\sin(T)+y_0(1-\cos(T)),$
	and
	\begin{equation*}
	\mbf{Var}\left[TA_T\right]=\alpha^2\int_{0}^{T}\left[1-\cos(T-s)\right]^2\ud s=\alpha^2\left[\frac{3T}{2}-2\sin(T)+\frac{\sin(2T)}{4}\right].
	\end{equation*} 
	Hence $\lambda TA_T\sim\mcal N\left(\lambda \mbf E\left[TA_T\right],\lambda^2\mbf{Var}\left[TA_T\right]\right)$ for every $\lambda\in\mbb R$. It follows from the characteristic function of $\lambda TA_T$ that
	$\mbf Ee^{\lambda TA_T}=e^{\lambda\mathbf E\left[TA_T\right]+\frac{\lambda^2}{2}\mbf{Var}\left[TA_T\right]}.$
	In this way, we obtain the logarithmic moment generating function
	$\lam=\lim_{T\to\infty}\frac{1}{T}\log\mbf Ee^{\lambda TA_T}
	=\frac{3\alpha^2}{4}\lambda^2$,
	which means that $\Lambda(\cdot)$ is an essentially smooth, lower semicontinuous function.
	Moreover, we have that the origin $0$ belongs to  $\mcal D_{\Lambda}^{\circ}=\mathbb R$.
	By the Theorem \ref{G-E}, we obtain that $\{A_T\}_{T>0}$ satisfies an LDP with the good rate function 
	$I(y)=\Lambda^*(y)=\underset{\lambda\in\mbb R}{\sup}\{y\lambda-\lam\}=\frac{y^2}{3\alpha^2}.$
\end{proof}

Notice that the LDP for $\{A_T\}_{T>0}$ is independent of the initial value $(x_0,y_0)$ of the stochastic oscillator \eqref{Eq1}. Theorem \ref{LDP for AT} indicates that, for any initial value $(x_0,y_0)$, the probability that the mean position $\{A_T\}_{T>0}$ hits the interval $[a,a+\ud a]$ decays exponentially and formally satisfies 
$\mbf P\left(A_T\in[a,a+\ud a]\right)\approx e^{-TI(a)}\ud a=e^{-T\frac{y^2}{3\alpha^2}}\ud a$, for sufficiently large $T$.

Similarly, we give the result of the LDP for $\{B_T\}_{T>0}$ in the following theorem.
\begin{theo}\label{LDP for BT}
	$\{B_T\}_{T>0}$ satisfies an LDP with the good rate function 	$J(y)=\frac{y^2}{\alpha^2}$.  
\end{theo}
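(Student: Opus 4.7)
The plan is to mimic the proof of Theorem \ref{LDP for AT} almost verbatim, since $B_T=X_T/T$ is again a Gaussian observable with an explicit mean and variance given by the formula \eqref{Xt}. The only arithmetic that genuinely differs is the large-$T$ behaviour of the variance, which will produce a different leading coefficient in the logarithmic moment generating function.

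First I would take $TB_T = X_T = x_0\cos(T)+y_0\sin(T)+\alpha\int_0^T\sin(T-s)\,\ud W_s$ from \eqref{Xt}. By It\^o's isometry, $X_T$ is Gaussian with $\mathbf E[TB_T] = x_0\cos(T)+y_0\sin(T)$, which is bounded in $T$, and
\begin{equation*}
\mathbf{Var}[TB_T] \;=\; \alpha^2\int_0^T\sin^2(T-s)\,\ud s \;=\; \alpha^2\Bigl[\frac{T}{2}-\frac{\sin(2T)}{4}\Bigr].
\end{equation*}
Since $\lambda T B_T\sim\mathcal N(\lambda\mathbf E[TB_T],\lambda^2\mathbf{Var}[TB_T])$, the characteristic function of a Gaussian gives $\mathbf E e^{\lambda T B_T} = \exp\bigl(\lambda\mathbf E[TB_T]+\tfrac{\lambda^2}{2}\mathbf{Var}[TB_T]\bigr)$.

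Next I would compute the logarithmic moment generating function as
\begin{equation*}
\Lambda(\lambda)\;=\;\lim_{T\to\infty}\frac{1}{T}\log\mathbf E e^{\lambda T B_T}\;=\;\lim_{T\to\infty}\Bigl[\frac{\lambda\mathbf E[TB_T]}{T}+\frac{\lambda^2}{2T}\mathbf{Var}[TB_T]\Bigr]\;=\;\frac{\alpha^2}{4}\lambda^2,
\end{equation*}
where the mean contribution vanishes (bounded divided by $T$) and the variance term contributes $\alpha^2\lambda^2/4$ in the limit. The function $\Lambda$ is smooth and convex on all of $\mathbb R$, and hence essentially smooth and lower semicontinuous; moreover $0\in\mathcal D_\Lambda^\circ=\mathbb R$.

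Finally I would invoke the G\"artner--Ellis theorem (Theorem \ref{G-E}) to conclude that $\{B_T\}_{T>0}$ satisfies an LDP with the good rate function
\begin{equation*}
J(y)\;=\;\Lambda^*(y)\;=\;\sup_{\lambda\in\mathbb R}\Bigl\{y\lambda-\frac{\alpha^2}{4}\lambda^2\Bigr\}\;=\;\frac{y^2}{\alpha^2},
\end{equation*}
attained at $\lambda=2y/\alpha^2$. There is essentially no obstacle here: the argument is structurally identical to Theorem \ref{LDP for AT}, and the only care needed is to confirm that the boundary terms (the bounded deterministic mean and the oscillatory $\sin(2T)/4$ piece of the variance) indeed wash out under division by $T$, which they manifestly do.
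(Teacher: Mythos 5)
Your proof is correct and follows exactly the approach the paper intends: the paper states that this proof is ``analogous to that of Theorem \ref{LDP for AT}, and hence is omitted,'' and your argument is precisely that analogue, with the correct variance $\alpha^2[\tfrac{T}{2}-\tfrac{\sin(2T)}{4}]$, limit $\Lambda(\lambda)=\tfrac{\alpha^2}{4}\lambda^2$, and Legendre transform $J(y)=\tfrac{y^2}{\alpha^2}$.
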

\begin{proof}
	This proof is analogous to that of Theorem \ref{LDP for AT}, and hence is omitted.
\end{proof}
$\square$

The above two theorems give the LDPs of $\{A_T\}_{T>0}$ and $\{B_T\}_{T>0}$. 
For a numerical approximation $\{x_n,y_n\}$ of the linear stochastic oscillator \eqref{Eq1}, two natural questions are:  Do its discrete mean position $A_N=\frac{1}{N}\sum_{n=0}^{N-1}x_n$ and  discrete mean velocity $B_N=\frac{x_N}{Nh}$ satisfy similar LDPs as continuous case?
Is the method able to preserve or asymptotically preserve the LDPs of $\{A_T\}_{T>0}$ and $\{B_T\}_{T>0}$ in the sense that the modified rate functions converge to the rate functions of exact solution?
The next several sections of this paper are devoted  to answering the above questions.

\section{LDP for discrete mean position $\{A_N\}_{N\geq1}$} \label{Sec3}
In this section, we study the LDP for the discrete mean position of general numerical methods. We show that symplectic methods and non-symplectic ones satisfy different types of LDPs.

Let $\{(x_n,y_n)\}_{n\geq 1}$ be the discrete approximations at $t_n=nh$ with $x_n\approx X_{t_n}$,  $y_n\approx Y_{t_n}$, where $h>0$ is the given step-size. Following \cite{Tocino15}, we consider the general numerical methods in form of
\begin{equation}\label{Mthd}
\left(\begin{array}{c}
x_{n+1}\\y_{n+1}
\end{array}\right)
=A\left(\begin{array}{c}
x_n\\
y_n
\end{array}\right)+\alpha b\Delta W_n:
=\left(\begin{array}{cc}
a_{11}&a_{12}\\
a_{21}&a_{22}
\end{array}\right)
\left(\begin{array}{c}
x_n\\
y_n
\end{array}\right)
+\alpha\left(\begin{array}{c}
b_1\\
b_2
\end{array}\right)\Delta W_n,
\end{equation}
with $\Delta W_n=W_{t_{n+1}}-W_{t_n}$.  In fact, the real matrix $A$ and the real vector $b$  depend on both the method and  the constant step-size $h$.  In addition, we require $b_1^2+b_2^2\neq0$, which is natural since an effective numerical method for \eqref{Eq1} must depend on the Brownian motion. In the previous section, we derive the LDP for the mean position $\{A_T\}_{T>0}$ of the continuous system \eqref{Eq1}. In what follows, we consider the LDP for discrete mean position $\{A_N\}_{N\geq1}$ of the method \eqref{Mthd} and  study how closely the LDP for $\{A_N\}_{N\geq1}$ approximates the LDP for $\{A_T\}_{T>0}$. We recall that $A_N$ is defined as
\begin{equation}\label{AN}
A_N=\frac{1}{N}\sum\limits_{n=0}^{N-1}x_n,\qquad N=1,2,\dots
\end{equation}

Aiming at giving the general formula of $\{x_n\}$, we denote $M_n=\left(\begin{array}{c}
x_{n+1}\\x_n
\end{array}\right)$ for $n\geq 1$. It follows from recurrence \eqref{Mthd} that
$M_n=BM_{n-1}+r_n$, $n\geq 1,$ with 
\begin{equation*}
B=\left(\begin{array}{cc}
\text{tr}(A)&-\det(A)\\
1&0
\end{array}\right),\quad
r_n=\left(\begin{array}{c}
\alpha\left(b_1\Delta W_n+(a_{12}b_2-a_{22}b_1)\Delta W_{n-1}\right)\\
0
\end{array}\right),
\end{equation*}
where tr($A$) and det($A$) denote the trace and the determinant of $A$, respectively. In this way, we have
$M_n=B^nM_0+\sum_{j=1}^{n}B^{n-j}r_j$, $n\geq 1.$
Suppose that the coefficients in matrix $B$ satisfy
\begin{flalign*}
\mbf{(A1)} &\quad& \quad&4\det(A)-(\text{tr}(A))^2>0,&
\end{flalign*}
which guarantees that the eigenvalues of $B$ are
\begin{equation*}
\lambda_{\pm}=\frac{\text{tr}(A)}{2}\pm  \bm{i}\frac{\sqrt{4\det(A)-(\text{tr}(A))^2}}{2}=\sqrt{\det(A)}e^{\pm\bm{i}\theta},\qquad \bm{i}^2=-1,
\end{equation*}
for some $\theta\in(0,\pi)$ satisfying
\begin{equation}\label{theta}
\cos(\theta)=\frac{\text{tr}(A)}{2\sqrt{\det(A)}},\qquad\sin(\theta)=\frac{\sqrt{4\det(A)-(\text{tr}(A))^2}}{2\sqrt{\det(A)}}.
\end{equation}

Let $\hat{\alpha}_n=(\det(A))^{n/2}\frac{\sin((n+1)\theta)}{\sin(\theta)}$ and $\hat{\beta}_n=-(\det(A))^{(n+1)/2}\frac{\sin(n\theta)}{\sin(\theta)}$, for any integer $n$. It follows from the expression of  $M_n$ (one can refer to \cite{Tocino15}) that
\begin{equation*}
x_{n+1}=\hat{\alpha}_nx_1+\hat{\beta}_nx_0+\alpha\sum_{j=1}^{n}\hat{\alpha}_{n-j}\left[b_1\Delta W_j+(a_{12}b_2-a_{22}b_1)\Delta W_{j-1}\right],\qquad n\geq 0.
\end{equation*}
Since $x_1=a_{11}x_0+a_{12}y_0+\alpha b_1\Delta W_0$, $\hat{\alpha}_{-1}=0$ and $\hat{\alpha}_0=1$, for $n\geq 1$,
\begin{align}
x_n=&\left(a_{11}\hat{\alpha}_{n-1}+\hat{\beta}_{n-1}\right)x_0+a_{12}\hat{\alpha}_{n-1}y_0+\alpha b_1\hat{\alpha}_{n-1}\Delta W_0\nonumber\\
\phantom{=}&+\alpha\sum_{j=1}^{n-1}b_1\hat{\alpha}_{n-1-j}\Delta W_j+\alpha\sum_{j=0}^{n-2}(a_{12}b_2-a_{22}b_1)\hat{\alpha}_{n-2-j}\Delta W_j \nonumber\\
=&\left(a_{11}\hat{\alpha}_{n-1}+\hat{\beta}_{n-1}\right)x_0+a_{12}\hat{\alpha}_{n-1}y_0+\alpha\sum_{j=0}^{n-1}\left[b_1\hat{\alpha}_{n-1-j}+(\ab)\hat{\alpha}_{n-2-j}\right]\DW_j. \label{xn}
\end{align}

By \eqref{AN} and \eqref{xn}, we have
\begin{align}
NA_N=x_0+\sum_{n=1}^{N-1}x_n
=\left(1+a_{11}S^{\hat{\alpha}}_N+S^{\hat{\beta}}_N\right)x_0+a_{12}S^{\hat{\alpha}}_Ny_0+\alpha\sum_{j=0}^{N-2}c_j\DW_j,
\label{NAN}
\end{align}
where
$S^{\hat{\alpha}}_N=\sum_{n=0}^{N-2}\hat{\alpha}_n$, $S^{\hat{\beta}}_N=\sum_{n=0}^{N-2}\hat{\beta}_n$ and
\begin{align}
c_j:=&\sum_{n=j+1}^{N-1}\left[b_1\hat{\alpha}_{n-1-j}+(\ab)\hat{\alpha}_{n-2-j}\right]\nonumber\\
=&b_1\hat{\alpha}_{N-2-j}+(b_1+\ab)S^{\hat{\alpha}}_{N-1-j}.
\label{cj1}
\end{align}
To give  precise results of \eqref{NAN}, we need to compute  $S^{\hat{\alpha}}_N$ and $S^{\hat{\beta}}_N$ respectively, and then the following lemma is required (Its proof is given in the Appendix). 
\begin{lem}\label{lem1}
	For arbitrary $\theta\in(0,\pi)$, $N\in\mbb N^+$ and $a\in\mbb R$, it holds that
	\begin{equation}\label{sum1}
	\sum_{n=1}^{N}\sin(n\theta)a^n=\frac{a\sin(\theta)-a^{N+1}\sin((N+1)\theta)+a^{N+2}\sin(N\theta)}{1-2a\cos(\theta)+a^2}.
	\end{equation}
	In particular, if $a=1$, then
	\begin{equation}\label{sum2}
	\sum_{n=1}^{N}\sin(n\theta)=\frac{\cos\left(\frac{\theta}{2}\right)-\cos((N+\frac{1}{2})\theta)}{2\sin\left(\frac{\theta}{2}\right)}.
	\end{equation}
\end{lem}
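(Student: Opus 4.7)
The plan is to handle both identities via the complex-exponential trick, which reduces each sum to a geometric series. Writing $\sin(n\theta)=\mathrm{Im}(e^{\bm{i}n\theta})$ and setting $z=ae^{\bm{i}\theta}$, I would compute
\[
\sum_{n=1}^{N}\sin(n\theta)a^{n}=\mathrm{Im}\sum_{n=1}^{N}z^{n}=\mathrm{Im}\!\left(\frac{z-z^{N+1}}{1-z}\right),
\]
valid whenever $z\neq 1$, which for $\theta\in(0,\pi)$ is automatic unless $a=1$ and one chooses $\theta$ so that $z=1$; this is excluded since $\theta\in(0,\pi)$ forces $e^{\bm{i}\theta}\neq 1$.

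Next I would rationalize by multiplying numerator and denominator by $1-\bar z=1-ae^{-\bm{i}\theta}$, so the denominator becomes the real quantity $|1-z|^{2}=1-2a\cos(\theta)+a^{2}$. Expanding the numerator $(z-z^{N+1})(1-\bar z)$ and taking the imaginary part produces exactly $a\sin(\theta)-a^{N+1}\sin((N+1)\theta)+a^{N+2}\sin(N\theta)$, which gives identity \eqref{sum1}. This is essentially a routine computation; the only mild care needed is tracking the signs when pulling $\mathrm{Im}$ through the product.

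For the special case $a=1$, rather than specialize \eqref{sum1} and wrestle with the $0/0$-type simplification, I would prefer a direct telescoping proof using the product-to-sum identity
\[
2\sin\!\left(\tfrac{\theta}{2}\right)\sin(n\theta)=\cos\!\left((n-\tfrac{1}{2})\theta\right)-\cos\!\left((n+\tfrac{1}{2})\theta\right).
\]
Summing both sides from $n=1$ to $N$ collapses the right-hand side to $\cos(\theta/2)-\cos((N+\tfrac{1}{2})\theta)$, and dividing by $2\sin(\theta/2)$ (which is nonzero for $\theta\in(0,\pi)$) yields \eqref{sum2}. I expect no real obstacle; the one point worth flagging is that the step-size and mesh will later ensure $\theta\in(0,\pi)$ via assumption $\mathbf{(A1)}$, so the denominators appearing in both formulas are genuinely nonzero in every application of the lemma.
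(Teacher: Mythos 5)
Your proof of \eqref{sum1} is correct and follows essentially the same route as the paper: both exploit the exponential representation of $\sin(n\theta)$ and reduce to a geometric series (the paper writes $\sin(n\theta)=\frac{1}{2\bm{i}}(e^{\bm{i}n\theta}-e^{-\bm{i}n\theta})$ and sums two geometric series; you take $\mathrm{Im}$ of a single one and rationalize, which is the same computation repackaged). For \eqref{sum2}, however, you take a genuinely different path: the paper simply sets $a=1$ in \eqref{sum1}, obtaining $\frac{\sin(\theta)-\sin((N+1)\theta)+\sin(N\theta)}{2(1-\cos\theta)}$, and then applies the sum-to-product identity $\sin\alpha-\sin\beta=2\cos(\frac{\alpha+\beta}{2})\sin(\frac{\alpha-\beta}{2})$ to the numerator, while you re-derive \eqref{sum2} from scratch by telescoping $2\sin(\tfrac{\theta}{2})\sin(n\theta)=\cos((n-\tfrac12)\theta)-\cos((n+\tfrac12)\theta)$. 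Both are fine; the paper's route reuses the work already done, yours is self-contained and arguably cleaner. One small correction to your motivation: there is no ``$0/0$-type simplification'' to wrestle with at $a=1$ — the denominator $1-2a\cos\theta+a^2$ evaluates to $2(1-\cos\theta)$, which is strictly positive for $\theta\in(0,\pi)$, so direct specialization is entirely unproblematic; the simplification needed is just the sum-to-product identity, not a limit.
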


It follows from \eqref{sum1} that, for every $N\geq 1$,
{\small
	\begin{align}
	S^{\hat{\alpha}}_N
	=\frac{\sin(\theta)-\left(\sqrt{\det(A)}\right)^{N-1}\sin(N\theta)+\left(\sqrt{\det(A)}\right)^{N}\sin((N-1)\theta)}{\sin(\theta)\left(1-2\sqrt{\det(A)}\cos(\theta)+\det(A)\right)}. \label{Salpha}
	\end{align}
}
Further, because $\hat{\beta}_n=-\det(A)\hat{\alpha}_{n-1}$ and $\alpha_{-1}=0$, we have
{\small
	\begin{align}
	S^{\hat{\beta}}_N
	=-\frac{\det(A)\sin(\theta)-\left(\sqrt{\det(A)}\right)^{N}\sin((N-1)\theta)+\left(\sqrt{\det(A)}\right)^{N+1}\sin((N-2)\theta)}{\sin(\theta)\left(1-2\sqrt{\det(A)}\cos(\theta)+\det(A)\right)},\nonumber\\
	\label{Sbeta}
	\end{align}
}
and
{\small
	\begin{align}
	c_j=&\frac{b_1}{\sin(\theta)}\sin((N-1-j)\theta)\left(\sqrt{\det(A)}\right)^{N-2-j}+\frac{b_1+\ab}{\sin(
		\theta)}\cdot \nonumber \\
	\phantom{=}&\frac{\sin(\theta)-\left(\sqrt{\det(A)}\right)^{N-2-j}\sin((N-1-j)\theta)+\left(\sqrt{\det(A)}\right)^{N-1-j}\sin((N-2-j))\theta)}{1-2\sqrt{\det(A)}\cos(\theta)+\det(A)}. \nonumber \\ \label{cj2}
	\end{align}
}

Based on \eqref{Salpha}-\eqref{cj2}, we obtain the expression of $NA_N$. Next, we study the LDP of $\{A_N\}_{N\geq1}$ for symplectic methods and non-symplectic ones, respectively. It is known that the method \eqref{Mthd} preserves the symplectic structure, i.e., $\ud x_{n+1}\wedge\ud y_{n+1}=\ud x_{n}\wedge\ud y_{n}$, if and only if $\det(A)=1$ (In fact, this condition is equivalent to that method \eqref{Mthd} preserves the phase volume).
In addition, for non-symplectic methods,
we exclude the case $\det(A)>1$ which makes $S^{\hat{\alpha}}_N$, $S^{\hat{\beta}}_N$ and $c_N$ exponentially increase as $N$ increases. This is to say, we need to deal with the case $\det(A)=1$ and the case $\det(A)<1$ separately.  
\subsection{LDP of $\{A_N\}_{N\geq1}$ for symplectic methods}
In this part, we derive the LDP for $\{A_N\}_{N\geq1}$ of the method \eqref{Mthd} in the case of preserving the symplecticity. Hereafter we use the notation $K(a_1,\ldots,a_m)$ to denote some constant dependent on the parameters $a_1,\ldots,a_m$ but independent of $N$, which may vary from one line to another.

We assume that 
\begin{flalign*}
\mbf{(A2)} &\quad&\quad&\det(A)=1.&
\end{flalign*}
Under $\mbf{(A2)}$, we have $\hat{\alpha}_n=\frac{\sin((n+1)\theta)}{\sin(\theta)}$, $\hat{\beta}_n=-\frac{\sin(n\theta)}{\sin(\theta)}$. Then by \eqref{cj1} and \eqref{sum2}, we obtain
{\small
	\begin{gather}
	S^{\hat{\alpha}}_N=\frac{\cos\left(\frac{\theta}{2}\right)-\cos((N-\frac{1}{2})\theta)}{2\sin(\theta)\sin\left(\frac{\theta}{2}\right)},\quad 
	S^{\hat{\beta}}_N=-\frac{\cos\left(\frac{\theta}{2}\right)-\cos((N-\frac{3}{2})\theta)}{2\sin(\theta)\sin\left(\frac{\theta}{2}\right)}, \label{SUM1} \\
	c_j=\frac{(b_1+\ab)\cos\left(\frac{\theta}{2}\right)-b_1\cos((N-\frac{1}{2}-j)\theta)-(\ab)\cos((N-\frac{3}{2}-j)\theta)}{2\sin(\theta)\sin\left(\frac{\theta}{2}\right)}. \label{SUM3}
	\end{gather}
}
By \eqref{SUM1}, it holds that
$|S^{\hat{\alpha}}_N|+|S^{\hat{\beta}}_N|\leq K(\theta),$ for each $N\geq2.$
According to the increment independence of Brownian motions, it follows from \eqref{NAN} that $NA_N$ is Gaussian.
Further, it follows from \eqref{NAN} and \eqref{SUM3} that
\begin{gather} \label{ENAN}
\left|\mbf E[NA_N]\right|=\left|\left(1+a_{11}S^{\hat{\alpha}}_N+S^{\hat{\beta}}_N\right)x_0+a_{12}S^{\hat{\alpha}}_Ny_0\right|\leq K(x_0,y_0,\theta),\\
\label{Var1}
\mbf{Var}[NA_N]=\alpha^2h\sum_{j=0}^{N-2}c_j^2=\frac{\alpha^2h}{4\sin^2(\theta)\sin^2\left(\frac{\theta}{2}\right)}\sum_{j=0}^{N-2}\tilde{c}_j^2,
\end{gather}
with
{\small
	\begin{align}
	\tilde{c}^2_j=
	(b_1+\ab)^2\cos^2\left(\frac{\theta}{2}\right)+\frac{1}{2}b_1^2+\frac{1}{2}(\ab)^2+b_1(\ab)\cos(\theta)+R_j, \nonumber\\\label{CJ1}
	\end{align}
}
where
\begin{align}
R_j=&\frac{b_1^2}{2}\cos((2N-1-2j)\theta)+\frac{(\ab)^2}{2}\cos((2N-3-2j)\theta) \nonumber \\
\phantom{=}&-2b_1(b_1+\ab)\cos\left(\frac{\theta}{2}\right)\cos\left(\frac{(2N-1-2j)\theta}{2}\right) \nonumber\\
\phantom{=}&-2(b_1+\ab)(\ab)\cos\left(\frac{\theta}{2}\right)\cos\left(\frac{(2N-3-2j)\theta}{2}\right) \nonumber \\
\phantom{=}&+b_1(\ab)\cos((2N-2-2j)\theta). \nonumber
\end{align}
We claim
$\left|\sum_{j=0}^{N-2}R_j\right|\leq K(\theta).$
In detail, by  $\sum_{n=1}^{N}\cos\left((2n+1)\theta\right)=\frac{\sin\left((2N+2)\theta\right)-\sin(2\theta)}{2\sin(\theta)}$, we have
\begin{equation*}
\left|\sum_{j=0}^{N-2}\cos((2N-1-2j)\theta)\right|=\left|\sum_{n=1}^{N-1}\cos((2n+1)\theta)\right|=\left|\frac{\sin(2N\theta)-\sin(2\theta)}{2\sin(\theta)}\right|\leq K(\theta).
\end{equation*}
Analogously, we obtain
$
\left|\sum_{j=0}^{N-2}\cos((2N-3-2j)\theta)\right|+\left|\sum_{j=0}^{N-2}\cos\left(\frac{(2N-1-2j)\theta}{2}\right)\right|  
+\left|\sum_{j=0}^{N-2}\cos\left(\frac{(2N-3-2j)\theta}{2}\right)\right|+\left|\sum_{j=0}^{N-2}\cos((2N-2-2j)\theta)\right|\leq K(\theta), \nonumber
$
which proves the above claim.

Based on \eqref{ENAN}, \eqref{Var1}, \eqref{CJ1} and $\left|\sum_{j=0}^{N-2}R_j\right|\leq K(\theta)$,  we have
\begin{align}
\Lambda^h(\lambda):=&\lim_{N\to\infty}\frac{1}{N}\log\mbf Ee^{\lambda NA_N}\nonumber\\
=&\frac{\alpha^2h\lambda^2}{8\sin^2(\theta)\sin^2\left(\frac{\theta}{2}\right)}\left[(b_1+\ab)^2\cos^2\left(\frac{\theta}{2}\right)+\frac{1}{2}b_1^2\right.\nonumber\\
\phantom{=}&\left.+\frac{1}{2}(\ab)^2+b_1(\ab)\cos(\theta)\right]. \label{lamdah1}
\end{align}
As a result of \eqref{theta} with $\det(A)=1$, it holds that
\begin{gather}
\cos(\theta)=\frac{\tr(A)}{2},\quad\sin(\theta)=\frac{\sqrt{4-(\tr(A))^2}}{2}, \nonumber\\
\sin^2\left(\frac{\theta}{2}\right)=\frac{1-\cos(\theta)}{2}=\frac{2-\tr(A)}{4},\quad\cos^2\left(\frac{\theta}{2}\right)=\frac{1+\cos(\theta)}{2}=\frac{2+\tr(A)}{4}.\label{cossin}
\end{gather}
Substituting \eqref{cossin} into \eqref{lamdah1} yields that
\begin{align}
\Lambda^h(\lambda)=&
\frac{\alpha^2h\lambda^2}{2(2+\tr(A))(2-\tr(A))^2}\left[(b_1+\ab)^2(4+\tr(A))\right.\nonumber\\
\phantom{=}&\left.-2b_1(\ab)(2-\tr(A))\right]. \label{symlog}
\end{align}
In order to show that $\Lambda^h$ is essentially smooth. We need to use the following lemma, whose proof is given in the Appendix.
\begin{lem}\label{lem3.2}
	Under assumptions $\mbf{(A1)}$ and $\mbf{(A2)}$, we have
	
	$(1)$ $b_1^2+(\ab)^2\neq0$;
	
	$(2)$ $(b_1+\ab)^2(4+\rm{tr}(A))-2b_1(\ab)(2-\rm{tr}(A))>0.$
\end{lem}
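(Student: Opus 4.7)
The plan is to handle the two claims separately, with (1) used as the input to (2). For part (1), I would argue by contradiction: suppose $b_1 = 0$ and $\ab = 0$ simultaneously. Since the standing hypothesis $b_1^2 + b_2^2 \neq 0$ forces $b_2 \neq 0$ once $b_1 = 0$, the equation $\ab = a_{12}b_2 = 0$ gives $a_{12} = 0$. Under $\mbf{(A2)}$, $\det(A) = a_{11}a_{22} - a_{12}a_{21} = a_{11}a_{22} = 1$, so $a_{11}$ and $a_{22}$ share a sign, and AM--GM gives $|\tr(A)| = |a_{11}+a_{22}| \geq 2\sqrt{a_{11}a_{22}} = 2$, contradicting $\mbf{(A1)}$ (which under $\mbf{(A2)}$ reads $(\tr(A))^2 < 4$).

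For part (2), I would introduce $c := \ab$ and $t := \tr(A)$ to reduce clutter. Under $\mbf{(A1)}$ and $\mbf{(A2)}$, $|t| < 2$, so in particular $4+t > 0$ and $2-t > 0$. Expanding directly,
\[
(b_1+c)^2(4+t) - 2b_1 c (2-t) = (4+t)(b_1^2 + c^2) + 4(1+t)\, b_1 c.
\]
This is a quadratic form in $(b_1,c)$ whose symmetric matrix has leading entry $4+t>0$ and determinant
\[
(4+t)^2 - \bigl(2(1+t)\bigr)^2 = 3(4 - t^2) > 0,
\]
the positivity being exactly $\mbf{(A1)}$ rewritten under $\mbf{(A2)}$. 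Hence the form is positive definite. By part (1), $(b_1,c) \neq (0,0)$, so the expression is strictly positive, as required.

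The genuine content is slim: the only real observation is the cross-term regrouping $(b_1+c)^2(4+t) - 2b_1 c(2-t) = (4+t)(b_1^2+c^2) + 4(1+t) b_1 c$, together with the fortunate cancellation $(4+t)^2 - 4(1+t)^2 = 3(4-t^2)$ that ties the discriminant exactly to assumption $\mbf{(A1)}$. The main obstacle, if any, is simply making sure that $\mbf{(A1)}$ is invoked \emph{twice}: once to give $|t| < 2$ so that $4+t > 0$ and the discriminant is positive, and once (via part (1), which also uses $\mbf{(A2)}$ and $b_1^2+b_2^2 \neq 0$) to rule out the degenerate case $(b_1, c) = (0,0)$ where the positive-definite form would still evaluate to zero.
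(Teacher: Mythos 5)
Your proof is correct. Part (1) matches the paper's argument essentially verbatim (contradiction via $b_1=\ab=0 \Rightarrow a_{12}=0 \Rightarrow a_{11}a_{22}=1$, then AM--GM forces $|\tr(A)|\geq 2$, contradicting $\mbf{(A1)}$ with $\mbf{(A2)}$).

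For part (2) you take a genuinely different and cleaner route than the paper. The paper sets $p=b_1$, $q=\ab$, rewrites
$S = \tr(A)\left((p+q)^2+2pq\right)+4(p+q)^2-4pq$,
and then studies the infimum of $S$ over $\tr(A)\in(-2,2)$ in three cases according to the sign of $(p+q)^2+2pq$. Your regrouping
\[
(b_1+c)^2(4+t)-2b_1c(2-t)=(4+t)(b_1^2+c^2)+4(1+t)\,b_1c
\]
exposes a symmetric quadratic form in $(b_1,c)$ whose Gram matrix has leading entry $4+t>0$ and determinant
$(4+t)^2-4(1+t)^2 = 3(4-t^2)>0$,
so the form is positive definite by Sylvester's criterion, and part (1) rules out the zero vector. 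This bypasses the case analysis entirely and makes the role of $\mbf{(A1)}$ transparent: it appears exactly as the strict positivity of the discriminant $3(4-t^2)$. Both approaches are correct; yours is more structural and arguably preferable as a write-up.
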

Lemma \ref{lem3.2}(2) means that $\Lambda^h$ is essentially smooth. It follows from Theorem \ref{G-E} that $\{A_N\}_{N\geq1}$ satisfies an LDP with the good rate function
\begin{align}
I^h(y)=&\underset{\lambda\in\mbb R}{\sup}\{y\lambda-\Lambda^h(\lambda)\}  \nonumber\\
=&\frac{(2+\tr(A))(2-\tr(A))^2y^2}{2\alpha^2h\left[(b_1+\ab)^2(4+\tr(A))-2b_1(\ab)(2-\tr(A))\right]}.\label{LDP1I}
\end{align}
Finally, we acquire the following theorem:
\begin{theo}\label{LDP1}
	If the numerical method \eqref{Mthd} for approximating the SDE \eqref{Eq1} satisfies the assumptions $\mbf{(A1)}$ and $\mbf{(A2)}$,  then its mean position $\{A_N\}_{N\geq1}$ satisfies an LDP with the good rate function
	given by \eqref{LDP1I}.
\end{theo}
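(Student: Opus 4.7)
The plan is to apply the Gärtner--Ellis theorem (Theorem \ref{G-E}) to the sequence $\{A_N\}_{N\geq 1}$ at scale $1/N$. This reduces the task to three subtasks: first, confirm that $NA_N$ is Gaussian so the logarithmic moment generating function has an explicit form; second, compute the limit $\Lambda^h(\lambda)=\lim_{N\to\infty}\frac{1}{N}\log\mathbf{E}e^{\lambda NA_N}$ in closed form; and third, verify essential smoothness so that $I^h$ arises as the Fenchel--Legendre transform of $\Lambda^h$.

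For the first step, I would use the closed-form representation \eqref{xn} of $x_n$ as an affine function of the increments $\Delta W_0,\ldots,\Delta W_{n-1}$, sum over $n=0,\ldots,N-1$, and invoke the independence of these increments to conclude that $NA_N$ is Gaussian with mean and variance given by the coefficient sums in \eqref{NAN}. Because $NA_N$ is Gaussian, $\log\mathbf{E}e^{\lambda NA_N}$ equals $\lambda\mathbf{E}[NA_N]+\tfrac{\lambda^2}{2}\mathbf{Var}[NA_N]$ exactly, so the limit after division by $N$ is governed purely by the growth rate of the variance.

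For the second step, the plan is to use Lemma \ref{lem1} together with the symplecticity assumption $\det(A)=1$ to write $S^{\hat\alpha}_N$, $S^{\hat\beta}_N$ and $c_j$ in the closed forms \eqref{SUM1}--\eqref{SUM3}. The crucial observation is that $|S^{\hat\alpha}_N|+|S^{\hat\beta}_N|\leq K(\theta)$ uniformly in $N$, which immediately controls the mean as in \eqref{ENAN}. Expanding $c_j^2$ isolates a constant part (producing linear growth in $N$ after summation over $j$) plus oscillatory remainders $R_j$ built from cosines of integer and half-integer multiples of $\theta$. Using elementary telescoping identities such as $\sum_{n=1}^{N}\cos((2n+1)\theta)=\frac{\sin((2N+2)\theta)-\sin(2\theta)}{2\sin(\theta)}$ and their half-angle analogues, the cumulative remainder $\sum_{j=0}^{N-2}R_j$ is shown to be $O(1)$. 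Assumption $\mbf{(A1)}$ enters precisely here via $\theta\in(0,\pi)$ and $\sin(\theta)\neq 0$. Dividing by $N$ and passing to the limit produces the quadratic in \eqref{lamdah1}, and then substituting the half-angle relations \eqref{cossin} simplifies it to the explicit form \eqref{symlog}.

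For the third step, Lemma \ref{lem3.2}(2) guarantees that the coefficient of $\lambda^2$ in $\Lambda^h$ is strictly positive, so $\Lambda^h$ is a convex quadratic on all of $\mathbb{R}$ with $\mathcal{D}_{\Lambda^h}^\circ=\mathbb{R}$ containing the origin; essential smoothness and lower semicontinuity are then automatic. Theorem \ref{G-E} yields the LDP, and the supremum defining the Fenchel--Legendre transform of a nondegenerate quadratic is computed by completing the square, producing the formula \eqref{LDP1I}. The main obstacle is the careful accounting in the second step: several trigonometric frequencies and phase shifts appear simultaneously, and one must verify that every oscillatory sum is $O(1)$ so that only the targeted non-oscillatory part survives to give the correct leading coefficient. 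The positivity statement of Lemma \ref{lem3.2}(2) is the other delicate point, since without it the rate function would be degenerate; its proof ultimately exploits the symplecticity condition $\det(A)=1$ together with $\mbf{(A1)}$.
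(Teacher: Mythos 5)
Your proposal follows essentially the same route as the paper: establish Gaussianity of $NA_N$, use Lemma \ref{lem1} under $\det(A)=1$ to obtain the closed forms \eqref{SUM1}--\eqref{SUM3}, bound the mean by boundedness of $S^{\hat\alpha}_N$ and $S^{\hat\beta}_N$, split $c_j^2$ into a constant part and an oscillatory remainder $R_j$ whose cumulative sum is $O(1)$ via telescoping cosine identities, pass to the limit to get $\Lambda^h$, invoke Lemma \ref{lem3.2}(2) for essential smoothness, and conclude via Gärtner--Ellis and the Fenchel--Legendre transform. The reasoning and the points flagged as delicate match the paper's proof; no gaps.
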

\begin{rem}\label{rem3.3}
	Theorem \ref{LDP1} indicates that to make the LDP hold for $\{A_N\}_{N\geq1}$, the step-size $h$ need to be restricted such that conditions $\mbf{(A1)}$ and $\mbf{(A2)}$  hold. Moreover, the rate function $I^h(y)$ does not depend on the initial $(x_0,y_0)$. That is to say, for appropriate step-size $h$ and arbitrary initial value, $\{A_N\}_{N\geq1}$ formally satisfies $\mbf P(A_N\in[a,a+\ud a])\approx e^{-NI^h(a)}\ud a$ for sufficiently large $N$.
\end{rem}
\subsection{LDP of $\{A_N\}_{N\geq1}$ for non-symplectic methods}
In this part, we show the LDP for $\{A_N\}_{N\geq1}$ of method \eqref{Mthd} when it does not preserve the symplecticity. To this end, we firstly suppose that
\begin{flalign*}
\mbf{(A3)} &\quad&\quad&0<\det(A)<1.&
\end{flalign*}
Under condition $\mbf{(A3)}$, one immediately concludes from \eqref{Salpha} and \eqref{Sbeta} that
$\left|S^{\hat{\alpha}}_N\right|+\left|S^{\hat{\beta}}_N\right|\leq K(\theta)$, for all $N\geq 2$,
which gives
\begin{equation} \label{3.2E}
\left|\mbf E[NA_N]\right|\leq K(x_0,y_0,\theta).
\end{equation}
It follows from \eqref{NAN} and \eqref{cj2} that
\begin{equation} \label{3.2Var}
\mbf{Var}(NA_N)=\alpha^2h\sum_{j=0}^{N-2}c_j^2,
\end{equation}
where
\begin{equation} \label{3.2cj^2} c_j^2=\left(\frac{b_1+\ab}{1-2\sqrt{\det(A)}\cos(\theta)+\det(A)}\right)^2+\tilde{R}_j,
\end{equation}
with
{\scriptsize
	\begin{align}
	\tilde{R}_j 
	=&\frac{b_1^2\sin^2((N-1-j)\theta)(\det(A))^{N-2-j}}{\sin^2(\theta)}+\frac{(b_1+\ab)^2}{\sin^2(\theta)\left(1-2\sqrt{\det(A)}\cos(\theta)+\det(A)\right)^2}\cdot\nonumber \\
	\phantom{=}&\left[(\det(A))^{N-2-j}\sin^2((N-1-j)\theta)+(\det(A))^{N-1-j}\sin^2((N-2-j)\theta)-2\sin(\theta)\left(\sqrt{\det(A)}\right)^{N-2-j}\sin((N-1-j)\theta)\right. \nonumber\\
	\phantom{=}&\left.+2\sin(\theta)\left(\sqrt{\det(A)}\right)^{N-1-j}\sin((N+2-j)\theta)-2\left(\sqrt{\det(A)}\right)^{2N-3-2j}\sin((N-1-j)\theta)\sin((N-2-j)\theta)\right] \nonumber\\
	\phantom{=}&+\frac{2b_1(b_1+\ab)}{\sin^2(\theta)\left(1-2\sqrt{\det(A)}\cos(\theta)+\det(A)\right)}\left[\sqrt{\det(A)}^{N-2-j}\sin(\theta)\sin((N-1-j)\theta)\right. \nonumber\\
	\phantom{=}&\left.-(\det(A))^{N-2-j}\sin^2((N-1-j)\theta))+\left(\sqrt{\det(A)}\right)^{2N-3-2j}\sin((N-1-j)\theta)\sin((N-2-j)\theta
	)\right].\nonumber
	\end{align}
}
Moreover, it holds that
\begin{equation} \label{3.2SUMR}
\left|\sum_{j=0}^{N-2}\tilde{R}_j\right|\leq K(\theta)\sum_{j=0}^{N}\left(\sqrt{\det(A)}\right)^j\leq K(\theta).
\end{equation}
Combining \eqref{3.2E}, \eqref{3.2Var}, \eqref{3.2cj^2} and \eqref{3.2SUMR} leads to
\begin{align}
\widetilde{\Lambda}^h(\lambda)=&\lim_{N\to\infty}\frac{1}{N}\log\mbf Ee^{\lambda NA_N}  \nonumber\\
=&\frac{\alpha^2h\lambda^2}{2}\lim_{N\to\infty}\frac{1}{N}\left[\left(\frac{b_1+\ab}{1-2\sqrt{\det(A)}\cos(\theta)+\det(A)}\right)^2(N-1)+\sum_{j=0}^{N-2}\tilde{R}_j\right]\nonumber\\
=&\frac{\alpha^2h\lambda^2}{2}\left(\frac{b_1+\ab}{1-2\sqrt{\det(A)}\cos(\theta)+\det(A)}\right)^2\nonumber.
\end{align}
If we assume that
\begin{flalign*}
\mbf{(A4)} &\quad&\quad&b_1+\ab\neq0,&
\end{flalign*}
then it follows from Theorem \ref{G-E} that $\{A_N\}_{N\geq 1}$ satisfies an LDP with the good rate function
$\widetilde{I}^h(y)=\frac{y^2}{2\alpha^2h}\left(\frac{1-2\sqrt{\det(A)}\cos(\theta)+\det(A)}{b_1+\ab}\right)^2 
=\frac{y^2}{2\alpha^2h}\left(\frac{1-\tr(A)+\det(A)}{b_1+\ab}\right)^2,  $
where we have used \eqref{theta} in the second equality.
Finally, we obtain the following theorem:
\begin{theo} \label{LDP2}
	If the numerical method \eqref{Mthd} for approximating the SDE \eqref{Eq1} satisfies the assumptions $\mbf{(A1)}$, $\mbf{(A3)}$ and $\mbf{(A4)}$, then its mean position $\{A_N\}_{N\geq1}$ satisfies an LDP with the good rate function
	$\widetilde I^h(y)=\frac{y^2}{2\alpha^2h}\left(\frac{1-\mrm{tr}(A)+\det(A)}{b_1+\ab}\right)^2.$
\end{theo}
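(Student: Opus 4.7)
The plan is to mirror the symplectic argument of Theorem \ref{LDP1} while exploiting the geometric decay coming from $\det(A)<1$, and then invoke the G\"artner--Ellis theorem (Theorem \ref{G-E}).

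First, I would use the explicit representation \eqref{NAN} of $NA_N$, which expresses $NA_N$ as an affine combination of independent Gaussian increments $\Delta W_j$. Thus $NA_N$ is itself Gaussian, and the logarithmic moment generating function reduces, via the Gaussian MGF, to
\begin{equation*}
\tfrac{1}{N}\log\mathbf E e^{\lambda NA_N}=\tfrac{\lambda}{N}\mathbf E[NA_N]+\tfrac{\lambda^2}{2N}\mathbf{Var}[NA_N].
\end{equation*}
So the whole problem splits into controlling the mean and the variance as $N\to\infty$.

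For the mean, I would plug \eqref{Salpha} and \eqref{Sbeta} into the coefficient of $(x_0,y_0)$ in \eqref{NAN}; under $\mathbf{(A3)}$ the geometric factors $(\sqrt{\det(A)})^N$ are bounded (in fact decay), so $|\mathbf E[NA_N]|\le K(x_0,y_0,\theta)$ and the mean contribution vanishes after dividing by $N$. For the variance, I would write $\mathbf{Var}[NA_N]=\alpha^2 h\sum_{j=0}^{N-2}c_j^2$ with $c_j$ as in \eqref{cj2}, and split $c_j^2$ into a \emph{dominant constant} plus a \emph{transient remainder}: every summand of $c_j$ except the term $(b_1+\ab)/(1-2\sqrt{\det(A)}\cos\theta+\det(A))$ carries a prefactor $(\sqrt{\det(A)})^{N-1-j}$ or $(\sqrt{\det(A)})^{N-2-j}$. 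Expanding the square therefore gives
\begin{equation*}
c_j^2=\Bigl(\tfrac{b_1+\ab}{1-2\sqrt{\det(A)}\cos\theta+\det(A)}\Bigr)^2+\widetilde R_j,
\end{equation*}
where $\widetilde R_j$ is a sum of products each containing at least one factor $(\sqrt{\det(A)})^{k(N-j)}$ with $k\ge 1$. Summing in $j$ over a geometric series bounded by $\sum_j(\sqrt{\det(A)})^j<\infty$ gives $|\sum_{j=0}^{N-2}\widetilde R_j|\le K(\theta)$, so this remainder is negligible after division by $N$.

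Consequently $\Lambda^h(\lambda):=\lim_{N\to\infty}\tfrac1N\log\mathbf E e^{\lambda NA_N}=\tfrac{\alpha^2 h\lambda^2}{2}\bigl(\tfrac{b_1+\ab}{1-2\sqrt{\det(A)}\cos\theta+\det(A)}\bigr)^2$. Under $\mathbf{(A4)}$ this is a nondegenerate quadratic in $\lambda$, hence finite, differentiable, steep and lower semicontinuous on $\mathbb R$, with $0\in\mathcal D_{\Lambda^h}^\circ$, so Theorem \ref{G-E} yields the LDP with rate function $\widetilde I^h(y)=\sup_{\lambda\in\mathbb R}\{y\lambda-\Lambda^h(\lambda)\}$ computed by completing the square. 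Finally I would simplify using \eqref{theta}, namely $2\sqrt{\det(A)}\cos\theta=\tr(A)$, to replace $1-2\sqrt{\det(A)}\cos\theta+\det(A)$ by $1-\tr(A)+\det(A)$ and obtain the stated form of $\widetilde I^h$.

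The only real obstacle is the bookkeeping in showing $|\sum_j\widetilde R_j|\le K(\theta)$: one must verify that \emph{every} cross term generated by squaring $c_j$ genuinely carries a strictly positive power of $\sqrt{\det(A)}^{N-j}$, so that the $\det(A)<1$ condition in $\mathbf{(A3)}$ produces a summable geometric bound uniform in $N$. Everything else is a routine Gaussian/Fenchel--Legendre computation parallel to the symplectic case.
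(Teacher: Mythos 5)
Your proposal follows exactly the paper's proof: same Gaussian reduction via \eqref{NAN}, same boundedness of $\mathbf E[NA_N]$ from \eqref{Salpha}--\eqref{Sbeta} under $\mathbf{(A3)}$, the identical decomposition $c_j^2 = \bigl(\tfrac{b_1+\ab}{1-2\sqrt{\det(A)}\cos\theta+\det(A)}\bigr)^2+\widetilde R_j$ with a geometric bound $\bigl|\sum_j \widetilde R_j\bigr|\le K(\theta)$, then G\"artner--Ellis under $\mathbf{(A4)}$, and the final simplification via $2\sqrt{\det(A)}\cos\theta=\mathrm{tr}(A)$ from \eqref{theta}. The bookkeeping you flag as the one real obstacle is indeed what the paper carries out explicitly in defining $\widetilde R_j$ and bounding it by \eqref{3.2SUMR}, and it works exactly as you anticipate.
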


\vspace{5mm}
\section{Asymptotical preservation for the LDP of $\{A_T\}_{T>0}$} \label{Sec4}
In Section \ref{Sec3}, we acquire the LDP for mean position $\{A_N\}_{N\geq1}$
when the method \eqref{Mthd} is symplectic or non-symplectic separately, for given appropriate step-size. In this section, we study their asymptotical preservation for the LDP of $\{A_T\}_{T>0}$ as step-size tends to $0$ (see Definition \ref{Def1.1}).  
By Definition \ref{Def1.1}, we obtain the modified rate functions of the rate functions appearing in Theorems \ref{LDP1} and \ref{LDP2}, respectively, as follows:
\begin{gather}
I_{mod}^h(y)=\frac{(2+\mrm{tr}(A))(2-\mrm{tr}(A))^2y^2}{2\alpha^2h^2\left[(b_1+\ab)^2(4+\mrm{tr}(A))-2b_1(\ab)(2-\mrm{tr}(A))\right]},   \label{mod1}\\
\widetilde I_{mod}^h(y)=\frac{y^2}{2\alpha^2h^2}\left(\frac{1-\mrm{tr}(A)+\det(A)}{b_1+\ab}\right)^2.  \label{mod2}
\end{gather}

It would fail to get the asymptotically convergence for $I_{mod}^h(y)$ and $I_{mod}^h(y)$ only by means of conditions $\mbf{(A1)}-\mbf{(A4)}$ in two aspects:
one is that both $A$ and $b$ are some functions of step-size $h$, which are unknown unless a specific method is applied;
the other is that for some $A$ and $b$, the numerical approximation may not be convergent to the original system.
A solution to this problem is studying the convergence on finite interval of numerical methods. In what follows, we consider the mean-sqaure convergence of the method \eqref{Mthd}. 

For the sake of simplicity, we first give some notations. Let $R=\mcal{O}(h^p)$ stand for $\left|R\right|\leq Ch^p$, for all sufficiently small step-size $h$, where $C$ is independent of $h$ and may vary from one line to another. $f(h)\sim h^p$ means that $f(h)$ and $h^p$ are equivalent infinitesimal. Furthermore, $\left\|\cdot\right\|_2$ denotes $2$-norm of a vector or matrix and $\left\|\cdot\right\|_F$ denotes Frobenius norm of a matrix. 

Since \eqref{Eq1} is driven by the additive  noise, the mean-square convergence order of general numerical methods which are known for the moment to approximate this system is no less than $1$. Hence, in what follows, we restrict \eqref{Mthd} to the numerical method with at least first order convergence in mean-square sense.
To give the conditions about the mean-square convergence of the method \eqref{Mthd}, we introduce the Euler-Maruyama method of form \eqref{Mthd} with
$
A^{EM}=\left(\begin{array}{cc}
1&h\\
-h&1
\end{array}\right)$, $
b^{EM}=\left(\begin{array}{c}
0\\1
\end{array}\right).
$
Based on the fundamental convergence theorem, we acquire the sufficient conditions which make numerical method \eqref{Mthd} have at least first order convergence in mean-square sense. 
\begin{theo} \label{tho4.1}
	If the numerical method \eqref{Mthd} satisfies
	\begin{equation} \label{Con}
	\left\|A-A^{EM}\right\|_F=\mcal{O}(h^2)\qquad \text{and}\qquad \left\|b-b^{EM}\right\|_2=\mcal{O}(h),
	\end{equation}
	then its  convergence order is at least $1$ in mean-square sense on any finite interval $[0,T_0]$, i.e.,
	$\underset{n\geq0,~nh\leq T_0}{\sup} \left[\mbf E\left(\left(x_n-X(t_n)\right)^2+\left(y_n-Y(t_n)\right)^2\right)\right]^{1/2}\leq K(T_0)h.$	
\end{theo}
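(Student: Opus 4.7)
The plan is to invoke the fundamental convergence theorem of Milstein (see \cite{MilsteinBook}), which says that a one-step numerical method for an SDE achieves global mean-square order $p$ on a finite interval provided the one-step mean error is $\mcal O(h^{p+1})$ and the one-step mean-square error is $\mcal O(h^{p+1/2})$. Here $p=1$ is the target, so I need local weak error $\mcal O(h^2)$ and local mean-square error $\mcal O(h^{3/2})$. The idea is to use the explicit solution \eqref{Xt} on a single step to write down the exact one-step propagator, then decompose the one-step discrepancy through the auxiliary Euler--Maruyama propagator $(A^{EM},b^{EM})$, exploiting hypothesis \eqref{Con}.

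First I would compute the exact one-step map: starting from $(x_n,y_n)$ at time $t_n$, the formula \eqref{Xt} applied to the shifted solution yields
\begin{equation*}
\begin{pmatrix}X_{t_{n+1}}\\Y_{t_{n+1}}\end{pmatrix}
=A^{\mrm{ex}}\begin{pmatrix}x_n\\y_n\end{pmatrix}
+\alpha\begin{pmatrix}\int_0^h\sin(h-s)\,\ud\widetilde W_s\\ \int_0^h\cos(h-s)\,\ud\widetilde W_s\end{pmatrix},\qquad
A^{\mrm{ex}}=\begin{pmatrix}\cos(h)&\sin(h)\\-\sin(h)&\cos(h)\end{pmatrix},
\end{equation*}
where $\widetilde W_s=W_{t_n+s}-W_{t_n}$. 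Taylor expansion gives $\|A^{\mrm{ex}}-A^{EM}\|_F=\mcal O(h^2)$, so the triangle inequality combined with the first part of \eqref{Con} yields $\|A-A^{\mrm{ex}}\|_F=\mcal O(h^2)$. For the noise, I would expand $\sin(h-s)=(h-s)+\mcal O(h^3)$ and $\cos(h-s)=1+\mcal O(h^2)$ and use the Itô isometry: the first stochastic integral has variance $\mcal O(h^3)$, while $\int_0^h\cos(h-s)\,\ud\widetilde W_s=\Delta W_n+\mcal O(h^{3/2})$ in $L^2(\Omega)$. Hence the EM noise vector $\alpha b^{EM}\Delta W_n$ differs from the exact noise increment by $\mcal O(h^{3/2})$ in mean-square, and the second part of \eqref{Con} together with $\mbf E|\Delta W_n|^2=h$ contributes an additional $\mcal O(h)\cdot\mcal O(h^{1/2})=\mcal O(h^{3/2})$ discrepancy. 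The deterministic part has zero conditional mean, and the stochastic parts also have vanishing conditional means given $\mcal F_{t_n}$, so the conditional mean of the one-step error is controlled only by $\|A-A^{\mrm{ex}}\|_F\cdot(|x_n|+|y_n|)=\mcal O(h^2)(1+|x_n|+|y_n|)$, which gives the required local weak order.

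Assembling these pieces yields, conditionally on $\mcal F_{t_n}$,
\begin{equation*}
\left|\mbf E\!\left[\begin{pmatrix}x_{n+1}-X_{t_{n+1}}\\y_{n+1}-Y_{t_{n+1}}\end{pmatrix}\Big|\mcal F_{t_n}\right]\right|\leq C(1+|x_n|+|y_n|)h^2,\qquad
\left\|\begin{pmatrix}x_{n+1}-X_{t_{n+1}}\\y_{n+1}-Y_{t_{n+1}}\end{pmatrix}\right\|_{L^2}\leq C(1+|x_n|+|y_n|)h^{3/2},
\end{equation*}
when the two schemes share initial data $(x_n,y_n)$ at $t_n$. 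With uniform-in-$h$ second moment bounds on $(x_n,y_n)$ (which follow from the recursion \eqref{Mthd} since $\|A\|_2$ is bounded in $h$ and $\mbf E|\Delta W_n|^2=h$), Milstein's fundamental theorem then delivers the global estimate $\sup_{nh\leq T_0}\bigl[\mbf E\bigl((x_n-X_{t_n})^2+(y_n-Y_{t_n})^2\bigr)\bigr]^{1/2}\leq K(T_0)h$.

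The main obstacle, in my view, is the careful bookkeeping of the stochastic integral expansions: one has to verify that the Taylor remainders of $\sin(h-s)$ and $\cos(h-s)$, once integrated against $\ud\widetilde W_s$ and combined with the $\mcal O(h)$ perturbation of $b^{EM}$ allowed by \eqref{Con}, do not jointly exceed $\mcal O(h^{3/2})$ in $L^2$ and $\mcal O(h^2)$ in conditional mean. The algebra is routine but must be done with some care because the allowed perturbation in $b$ is of lower order ($\mcal O(h)$) than the one in $A$ ($\mcal O(h^2)$), and it is only the $\mcal O(h^{1/2})$ size of $\Delta W_n$ in $L^2$ that saves the strong-order-$1$ conclusion.
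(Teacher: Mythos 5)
Your proposal is correct and follows essentially the same strategy as the paper's: both derive local one-step error estimates (mean error $\mcal O(h^2)$, mean-square error $\mcal O(h^{3/2})$) by comparing the numerical one-step map to the exact one-step propagator through the Euler--Maruyama intermediary $(A^{EM},b^{EM})$, and then invoke Milstein's fundamental convergence theorem. The only cosmetic difference is that you expand the exact one-step map via the known closed-form solution \eqref{Xt} (rotation matrix $A^{\mrm{ex}}$ plus explicit stochastic integrals), whereas the paper obtains the same decomposition $Z=A^{EM}z+\alpha b^{EM}\Delta W+R$ by a one-step Picard iteration of the integral form of \eqref{Eq1} and bounds the remainder $R$ directly; the resulting local error bounds and the final appeal to the fundamental theorem are identical.
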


We put the proof of this theorem into the Appendix. By the definitions of $2$-norm and Frobenius norm, \eqref{Con} is equivalent to
\begin{flalign*}
\mbf{(B)} &~&~&\left|a_{11}-1\right|+\left|a_{22}-1\right|+\left|a_{12}-h\right|+\left|a_{21}+h\right|=\mcal{O}(h^2),~\text{and}~\left|b_1\right|+\left|b_2-1\right|=\mcal O(h).&
\end{flalign*}
Using this condition $\mbf{(B)}$, we have the following lemma (its proof is given in the Appendix), which is used to study whether method \eqref{Mthd} asymptotically preserves the LDPs for $\{A_T\}_{T>0}$ or $\{B_T\}_{T>0}$ of exact solution.
\begin{lem} \label{lem4.3}
	Under the condition $\mbf{(B)}$,  the following properties hold:
	
	$\mrm{(1)}$ $\mrm{tr}(A)\to 2$ as $h\to 0$;	
	
	$\mrm{(2)}$ $\left(1-\mrm{tr}(A)+\det(A)\right)\sim h^2$;
	
	$\mrm{(3)}$ $\left(b_1+\ab\right)\sim h$.
\end{lem}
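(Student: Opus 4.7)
The strategy is straightforward bookkeeping of the error terms in condition $\mathbf{(B)}$, and the main subtlety will be an algebraic cancellation in part (2) that is essential for obtaining the sharp order $h^2$.

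The plan is to begin by translating $\mathbf{(B)}$ into asymptotic expansions of each entry: write $a_{11}=1+\varepsilon_{11}$, $a_{22}=1+\varepsilon_{22}$, $a_{12}=h+\varepsilon_{12}$, $a_{21}=-h+\varepsilon_{21}$ with $\varepsilon_{ij}=\mathcal O(h^2)$, and $b_1=\mathcal O(h)$, $b_2=1+\mathcal O(h)$. With this notation, part (1) is immediate, since $\mathrm{tr}(A)=a_{11}+a_{22}=2+\varepsilon_{11}+\varepsilon_{22}=2+\mathcal O(h^2)\to 2$ as $h\to0$.

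For part (2), I would expand
\[
\det(A)=a_{11}a_{22}-a_{12}a_{21}=1+\varepsilon_{11}+\varepsilon_{22}+h^2+h(\varepsilon_{12}-\varepsilon_{21})+\mathcal O(h^4),
\]
and then form
\[
1-\mathrm{tr}(A)+\det(A)=h^2+h(\varepsilon_{12}-\varepsilon_{21})+\mathcal O(h^4).
\]
The key observation, which is the only non-mechanical step in the argument, is that the $\mathcal O(h^2)$ contributions $\varepsilon_{11}+\varepsilon_{22}$ coming from $\det(A)$ exactly cancel the corresponding terms from $-\mathrm{tr}(A)$, leaving only $h\cdot\mathcal O(h^2)=\mathcal O(h^3)$ corrections to the leading $h^2$. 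This cancellation is the crux of the lemma and is what forces us to keep track of the signs and $\mathcal O(h^2)$ remainders rather than lumping them into a generic error bound from the start. It yields $1-\mathrm{tr}(A)+\det(A)=h^2+\mathcal O(h^3)\sim h^2$.

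For part (3), I would factor
\[
b_1+a_{12}b_2-a_{22}b_1 = b_1(1-a_{22})+a_{12}b_2,
\]
and then substitute the expansions: $b_1(1-a_{22})=\mathcal O(h)\cdot\mathcal O(h^2)=\mathcal O(h^3)$, while $a_{12}b_2=(h+\mathcal O(h^2))(1+\mathcal O(h))=h+\mathcal O(h^2)$. Adding gives $h+\mathcal O(h^2)\sim h$, completing the proof. Overall there is no real obstacle here; the content of the lemma lies entirely in recognising that the $\mathcal O(h^2)$ pieces of the trace and determinant must be tracked simultaneously so that the cancellation in part (2) can be exposed.
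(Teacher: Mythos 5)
Your proof is correct and follows essentially the same approach as the paper's: substitute the expansions given by $\mathbf{(B)}$ and track orders of $h$, with the cancellation in part (2) being the only non-trivial observation. The paper makes that cancellation automatic via the identity $1-\mathrm{tr}(A)+\det(A)=(a_{11}-1)(a_{22}-1)-a_{12}a_{21}$, whereas you expand and observe it term by term, but this is a cosmetic rather than a substantive difference.
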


By Lemma \ref{lem4.3}, we  obtain the convergence of the modified rate functions in \eqref{mod1} and \eqref{mod2}. 

Case $1$: Let $\mbf{(A1)}$, $\mbf{(A2)}$ and $\mbf{(B)}$ hold. Noting $\det(A)=1$ in this case, Lemma \ref{lem4.3}(2) yields $\left(2-\tr(A)\right)\sim h^2$. Hence,
\begin{equation} \label{lim}
\lim_{h\to 0}\frac{b_1(\ab)\left(2-\tr(A)\right)}{h^2}=0.
\end{equation}
It follows from Lemma \ref{lem4.3}, \eqref{mod1} and \eqref{lim} that
\begin{align}
\phantom{=}&\lim_{h\to 0}I^h_{mod}(y) \nonumber\\
=&\frac{y^2}{2\alpha^2}\frac{\lim_{h\to 0}\left(2+\tr(A)\right)}{\lim_{h\to 0}(4+\mrm{tr}(A))(b_1+\ab)^2/h^2-2\lim_{h\to 0}b_1(\ab)(2-\mrm{tr}(A))/h^2} \nonumber\\
=&\frac{y^2}{3\alpha^2}. \label{mod1lim}
\end{align}

Case $2$: Let $\mbf{(A1)}$, $\mbf{(A3)}$, $\mbf{(A4)}$ and $\mbf{(B)}$ hold. According to \eqref{mod2} and Lemma \ref{lem4.3}, we have
$\lim_{h\to 0}\widetilde{I}^h_{mod}=\frac{y^2}{2\alpha^2}\lim_{h\to 0}\frac{(h^2)^2}{h^2\cdot h^2}=\frac{y^2}{2\alpha^2}.$
Therefore, by Definition \ref{Def1.1}, we get the following two theorems.
\begin{theo} \label{MLDP1}
	For the numerical method \eqref{Mthd} approximating the stochastic oscillator \eqref{Eq1}, if the assumptions $\mbf{(A1)}$ and $\mbf{(A2)}$  hold, then we have 
	
	$\mrm{(1)}$ The method \eqref{Mthd} is symplectic;
	
	$\mrm{(2)}$ The discrete mean position $\{A_N\}_{N\geq1}$ of  method \eqref{Mthd} satisfies an LDP with the good rate function
	\begin{equation} \label{Ih1}
	I^h(y)=\frac{(2+\mrm{tr}(A))(2-\mrm{tr}(A))^2y^2}{2\alpha^2h\left[(b_1+\ab)^2(4+\mrm{tr}(A))-2b_1(\ab)(2-\mrm{tr}(A))\right]};
	\end{equation}
	
	$\mrm{(3)}$ Moreover, if assumption $\mbf{(B)}$ holds, then method \eqref{Mthd} asymptotically preserves the LDP of $\{A_T\}_{T>0}$, i.e., the modified rate function $I^h_{mod}(y)=I^h(y)/h$ satisfies:
	\begin{equation*}
	\lim_{h\to 0}I^h_{mod}(y)=I(y),\qquad\forall\quad y\in\mbb R,
	\end{equation*}
	where $I(\cdot)$ is the rate function of LDP for $\{A_T\}_{T>0}$.
\end{theo}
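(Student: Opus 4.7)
The plan is to assemble the theorem from three ingredients already prepared in the paper: the characterization of symplecticity in terms of $\det(A)$, the LDP derived in Section~\ref{Sec3} for the symplectic case, and the asymptotic estimates collected in Lemma~\ref{lem4.3}. There is nothing truly new to establish; the task is mostly one of bookkeeping and verifying that the various hypotheses match up correctly.

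For part (1), the paper has already recorded (just before the start of Section~3.1) that the method \eqref{Mthd} preserves the symplectic form $\ud x_{n+1}\wedge \ud y_{n+1}=\ud x_n\wedge \ud y_n$ if and only if $\det(A)=1$. Since assumption $\mbf{(A2)}$ is precisely $\det(A)=1$, this is immediate. For part (2), the conclusion is literally the content of Theorem~\ref{LDP1}: that theorem assumes $\mbf{(A1)}$ and $\mbf{(A2)}$, which are exactly the hypotheses of the present statement, and its rate function coincides with the one in \eqref{Ih1}. So I would simply invoke Theorem~\ref{LDP1}.

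The main work is part (3), the asymptotic preservation. Here the plan is to compute the limit as $h\to 0$ of
\[
I^h_{mod}(y)=\frac{(2+\mrm{tr}(A))(2-\mrm{tr}(A))^2 y^2}{2\alpha^2 h^2\bigl[(b_1+\ab)^2(4+\mrm{tr}(A))-2b_1(\ab)(2-\mrm{tr}(A))\bigr]}
\]
using the three estimates in Lemma~\ref{lem4.3}. Crucially, under $\mbf{(A2)}$ we have $\det(A)=1$, so $1-\mrm{tr}(A)+\det(A)=2-\mrm{tr}(A)$, and Lemma~\ref{lem4.3}(2) gives $(2-\mrm{tr}(A))\sim h^2$. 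Combining with Lemma~\ref{lem4.3}(1),(3) yields the asymptotics of each factor: the numerator satisfies $(2+\mrm{tr}(A))(2-\mrm{tr}(A))^2\sim 4h^4$; in the bracket in the denominator, $(b_1+\ab)^2(4+\mrm{tr}(A))\sim 6h^2$, while $2b_1(\ab)(2-\mrm{tr}(A))$ is of order $h^2\cdot h^2=h^4$ (here I use that $\mbf{(B)}$ forces $b_1=\mcal O(h)$ and $\ab = \mcal O(h)$, since $a_{12}\sim h$, $b_2\to 1$, $a_{22}\to 1$, $b_1=\mcal O(h)$). Hence the bracket behaves like $6h^2+\mcal O(h^4)$, and the overall ratio tends to $4y^2/(12\alpha^2)=y^2/(3\alpha^2)=I(y)$.

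The mildest obstacle is to confirm that the negligible cross term $2b_1(\ab)(2-\mrm{tr}(A))/h^4\to 0$, rather than contributing to the leading order; this is where one has to unpack the definition of $\ab$ using condition $\mbf{(B)}$ to see that $\ab=\mcal O(h)$ (and not merely $\mcal O(1)$), so the product is genuinely $o(h^4)$ after division. Once this is verified, Definition~\ref{Def1.1} applies and the asymptotic preservation is established. I would present the computation as a single chain of equalities via the limit laws, citing Lemma~\ref{lem4.3} at the key step.
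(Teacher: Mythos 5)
Your proposal is correct and follows essentially the same route as the paper: part (1) from the stated symplecticity criterion $\det(A)=1$, part (2) by invoking Theorem~\ref{LDP1}, and part (3) by combining Lemma~\ref{lem4.3} (noting that under $\mbf{(A2)}$ one has $1-\mrm{tr}(A)+\det(A)=2-\mrm{tr}(A)$, so $(2-\mrm{tr}(A))\sim h^2$) with the observation that $b_1(\ab)(2-\mrm{tr}(A))/h^2\to 0$. Your more explicit unpacking of $\mbf{(B)}$ to show $\ab=\mathcal O(h)$ and hence $b_1(\ab)=\mathcal O(h^2)$ is a slight refinement of the paper's terse limit statement, but it is the same computation.
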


\begin{theo} \label{MLDP2}
	For the numerical method \eqref{Mthd} approximating the stochastic oscillator \eqref{Eq1}, if the assumptions $\mbf{(A1)}$, $\mbf{(A3)}$ and $\mbf{(A4)}$ hold, then we have 
	
	$\mrm{(1)}$ The method \eqref{Mthd} is non-symplectic;
	
	$\mrm{(2)}$ The discrete mean position $\{A_N\}_{N\geq1}$ of  method \eqref{Mthd} satisfies an LDP with the good rate function
	$	\widetilde I^h(y)=\frac{y^2}{2\alpha^2h}\left(\frac{1-\mrm{tr}(A)+\det(A)}{b_1+\ab}\right)^2;$
	
	$\mrm{(3)}$ Moreover, if assumption $\mbf{(B)}$ holds, then method \eqref{Mthd} does not asymptotically preserve the LDP of $\{A_T\}_{T>0}$, i.e., for $y\neq0$,
	$	\lim_{h\to 0}\widetilde I^h_{mod}(y)\neq I(y),$
	where $\widetilde I^h_{mod}(y)=\widetilde I^h(y)/h$, and $I(\cdot)$ is the rate function of LDP for $\{A_T\}_{T>0}$.
\end{theo}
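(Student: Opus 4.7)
The plan is to view the three claims as a compact aggregation of facts already established; only part~(3) calls for any genuinely new calculation, and even there the bulk of the work has been carried out in the ``Case~$2$'' paragraph immediately preceding the theorem.

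For part~(1), I would invoke the characterization recalled before Theorem~\ref{LDP1}: scheme~\eqref{Mthd} is symplectic if and only if $\det(A)=1$. Assumption~$\mbf{(A3)}$ forces $0<\det(A)<1$, so the method cannot preserve the symplectic structure. Part~(2) is exactly the content of Theorem~\ref{LDP2}, proved via the G\"artner--Ellis theorem in Section~\ref{Sec3}, and I would simply cite it.

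For part~(3), I would read off $\widetilde{I}^h_{mod}$ from~\eqref{mod2},
\[
\widetilde{I}^h_{mod}(y)=\frac{y^2}{2\alpha^2}\cdot\frac{(1-\mrm{tr}(A)+\det(A))^2}{h^2(b_1+\ab)^2},
\]
and appeal to Lemma~\ref{lem4.3}, whose hypotheses are provided by~$\mbf{(B)}$. Parts~(2) and~(3) of that lemma give $(1-\mrm{tr}(A)+\det(A))\sim h^2$ and $(b_1+\ab)\sim h$, so both numerator and denominator above are equivalent to $h^4$ and their ratio tends to $1$. This yields $\lim_{h\to 0}\widetilde{I}^h_{mod}(y)=\frac{y^2}{2\alpha^2}$. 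Comparing with the rate function $I(y)=\frac{y^2}{3\alpha^2}$ supplied by Theorem~\ref{LDP for AT}, and noting $\frac{1}{2\alpha^2}\neq\frac{1}{3\alpha^2}$, I conclude $\lim_{h\to 0}\widetilde{I}^h_{mod}(y)\neq I(y)$ for every $y\neq 0$.

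The only delicate point is bookkeeping in the asymptotic expansion: the two equivalences from Lemma~\ref{lem4.3} must combine to produce a well-defined, nonzero constant in the limit, and the value $\frac{1}{2\alpha^2}$ that appears here reflects the absence of the extra cancellation which, in the symplectic Case~$1$, leverages the constraint $\det(A)=1$ to recover $\frac{1}{3\alpha^2}$. No further technical obstacle arises: the statement is essentially a corollary of Theorem~\ref{LDP2}, Lemma~\ref{lem4.3}, and Theorem~\ref{LDP for AT}.
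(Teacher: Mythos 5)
Your proposal is correct and follows essentially the same route as the paper: part (1) from the symplecticity criterion $\det(A)=1$, part (2) by citing Theorem \ref{LDP2}, and part (3) by combining \eqref{mod2} with Lemma \ref{lem4.3}(2)--(3) to obtain $\lim_{h\to 0}\widetilde{I}^h_{mod}(y)=\frac{y^2}{2\alpha^2}\neq\frac{y^2}{3\alpha^2}=I(y)$. This is precisely the paper's ``Case 2'' computation preceding the theorem.
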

\begin{rem}
	Theorems \ref{MLDP1} and \ref{MLDP2} indicate that under appropriate conditions, the  symplectic methods asymptotically preserve the LDP for the mean position $\{A_T\}_{T>0}$ of original system \eqref{Eq1}, while the  non-symplectic methods do not. This implies that, in comparison with non-symplectic methods, symplectic methods have  long-time stability in the aspect of LDP for the mean position.
\end{rem}
\vspace{5mm}

\section{LDP for discrete mean velocity $\{B_N\}_{N\geq1}$} \label{Sec5}
In Section \ref{Sec2}, we obtain the LDP for mean velocity $\{B_T\}_{T>0}$ of original system \eqref{Eq1}. In this section, following the ideas of dealing with discrete mean position, we investigate the LDP for discrete mean velocity. 

We consider the numerical approximation of $B_T=\frac{X_T}{T}$ at $t_N=Nh$. Noting that $x_N$ is used to approximate $X_{t_N}$ in terms of numerical method \eqref{Mthd}, we define discrete mean velocity as
\begin{equation}\label{BN}
B_N=\frac{x_N}{Nh},\qquad N=1,2,\dots.
\end{equation}
In what follows, we study the LDP for $\{B_N\}_{N\geq1}$ of method \eqref{Mthd} and its asymptotical preservation for LDP of $\{B_T\}_{T>0}$. Similar to the arguments on $\{A_N\}_{N\geq1}$, we  introduce the modified rate function to characterize how the LDP for $\{B_N\}_{N\geq1}$ approximates the LDP for $\{B_T\}_{T>0}$.

We still assume that $\mbf{(A1)}$ holds. In this case, the equality \eqref{xn} holds. Then
\begin{equation} \label{xN}
x_N=\left(a_{11}\hat{\alpha}_{N-1}+\hat{\beta}_{N-1}\right)x_0+a_{12}\hat{\alpha}_{N-1}y_0+\alpha\sum_{n=0}^{N-1}\left[b_1\hat{\alpha}_{N-1-n}+(\ab)\hat{\alpha}_{N-2-n}\right]\DW_n
\end{equation} 
with
\begin{gather*}
\hat{\alpha}_n=\left(\det(A)\right)^{n/2}\frac{\sin((n+1)\theta)}{\sin(\theta)},\qquad \hat{\beta}_n=-\left(\det(A)\right)^{\frac{n+1}{2}}\frac{\sin(n\theta)}{\sin(\theta)}.
\end{gather*}
According to \eqref{xN}, $x_N$ is Gaussian whose expectation is
{\small
	\begin{align}
	\mbf E(x_N)=
	\left(a_{11}\left(\det(A)\right)^{\frac{N-1}{2}}\frac{\sin(N\theta)}{\sin(\theta)}-\left(\det(A)\right)^{\frac{N}{2}}\frac{\sin((N-1)\theta)}{\sin(\theta)}\right)x_0 
	+a_{12}\left(\det(A)\right)^{\frac{N-1}{2}}\frac{\sin(N\theta)}{\sin(\theta)}y_0. \nonumber
	\end{align}}
If $0<\det(A)\leq1$, then $\left|\mbf E(x_N)\right|\leq K(\theta)$ which leads to
\begin{equation} \label{ExN}
\lim_{N\to\infty}\frac{\mbf E(x_N)}{N}=0.
\end{equation}
From \eqref{xN} and the fact $\hat{\alpha}_{-1}=0$, we get 
\begin{align}
\mbf{Var}(x_N)=&\alpha^2h\sum_{n=0}^{N-1}\left[b_1\hat{\alpha}_{N-1-n}+(\ab)\hat{\alpha}_{N-2-n}\right]^2 \nonumber\\
=&\alpha^2h\left[\left(b_1^2+(\ab)^2\right)\sum_{n=0}^{N-2}\hat{\alpha}^2_n+b_1^2\hat{\alpha}_{N-1}+2b_1(\ab)\sum_{n=1}^{N-1}\hat{\alpha}_{n}\hat{\alpha}_{n-1}\right].\nonumber\\ \label{VarxN}
\end{align}
Further, we have
\begin{gather}
\sum_{n=0}^{N-2}\hat{\alpha}^2_n=\sum_{n=0}^{N-2}\frac{(\det(A))^n\sin^2((n+1)\theta)}{\sin^2(\theta)}, \label{5.1} 
\end{gather}
\begin{align}
2\sum_{n=1}^{N-1}\hat{\alpha}_{n}\hat{\alpha}_{n-1}
=\frac{1}{\sin^2(\theta)}\sum_{n=1}^{N-1}\left(\det(A)\right)^{\frac{2n-1}{2}}\left(\cos(\theta)-\cos((2n+1)\theta)\right).\label{5.2}
\end{align}
As is analogous to the treatment of $\{A_N\}_{N\geq1}$, we  deal with  symplectic methods ($\det(A)=1$) and non-symplectic ones ($0<\det(A)<1$), respectively.
\subsection{LDP of $\{B_N\}_{N\geq1}$ for symplectic methods}
In this part, we study the LDP for $\{B_N\}_{N\geq1}$ of symplectic methods, so we assume that $\mbf{(A2)}$ holds.
Based on  $\det(A)=1$, \eqref{5.1} and \eqref{5.2}, we have
\begin{align}
\sum_{n=0}^{N-2}\hat{\alpha}^2_n=&\frac{1}{\sin^2(\theta)}\sum_{n=1}^{N-1}\sin^2(n\theta)
=\frac{1}{\sin^2(\theta)}\left(\frac{N-1}{2}-\frac{\sin((2N-1)\theta)-\sin(\theta)}{4\sin(\theta)}\right), \label{5.3}
\end{align}
and
\begin{align}
2\sum_{n=1}^{N-1}\hat{\alpha}_n\hat{\alpha}_{n-1}
=\frac{1}{\sin^2(\theta)}\left[(N-1)\cos(\theta)-\frac{\sin(2N\theta)-\sin(2\theta)}{2\sin(\theta)}\right]. \label{5.4}
\end{align}
Substituting \eqref{5.3} and \eqref{5.4} into \eqref{VarxN} yields 
\begin{align}
\mbf{Var}(x_N)=&\alpha^2h\left[\frac{b_1^2+(\ab)^2+2b_1(\ab)\cos(\theta)}{2\sin^2(\theta)}(N-1)\right. \nonumber \\
\phantom{=}&-\frac{\left[b_1^2+(\ab)^2\right]\left[\sin((2N-1)\theta)-\sin(\theta)\right]}{4\sin^3(\theta)}+\frac{b_1^2\sin^2(N\theta)}{\sin^2(\theta)} \nonumber \\
\phantom{=}&\left.-\frac{b_1(\ab)(\sin(2N\theta)-\sin(\theta))}{2\sin^3(\theta)}\right]. \label{VARxN}
\end{align}
Using \eqref{ExN}, \eqref{VARxN} and \eqref{theta} with $\det(A)=1$, we have
\begin{align}
\Lambda^h(\lambda)=&\lim_{N\to\infty}\frac{1}{N}\log\mbf Ee^{\lambda NB_N} \nonumber \\
=&\frac{\alpha^2\lambda^2\left[(b_1+\ab)^2-b_1(\ab)(2-\tr(A))\right]}{\left(4-(\tr(A))^2\right)h}. \label{LambdaB}
\end{align}
Before proving that $\Lambda^h$ is essentially smooth, we give the following lemma (See its proof in the Appendix).
\begin{lem}\label{Sec5lem1}
	Under assumptions $\mbf{(A1)}$ and $\mbf{(A2)}$, it holds that
	$(b_1+\ab)^2-b_1(\ab)(2-\rm{tr}(A))>0.$	
\end{lem}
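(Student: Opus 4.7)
The plan is to recognize the quantity $(b_1+\ab)^2 - b_1(\ab)(2-\tr(A))$ as a positive definite quadratic form in the two scalars $b_1$ and $\ab$. First I would abbreviate $c := \ab$ and expand algebraically:
\begin{equation*}
(b_1+c)^2 - b_1 c\bigl(2-\tr(A)\bigr) = b_1^2 + 2 b_1 c + c^2 - 2 b_1 c + b_1 c\,\tr(A) = b_1^2 + c^2 + b_1 c\,\tr(A).
\end{equation*}
So the claim reduces to showing $b_1^2 + c^2 + b_1 c\,\tr(A) > 0$.

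Next I would interpret this as the quadratic form $(b_1,c)\, M\, (b_1,c)^{\top}$ with symmetric matrix
\begin{equation*}
M = \begin{pmatrix} 1 & \tr(A)/2 \\ \tr(A)/2 & 1 \end{pmatrix}, \qquad \det M = 1 - \tfrac{(\tr(A))^2}{4}.
\end{equation*}
Under $\mbf{(A2)}$ we have $\det(A)=1$, so assumption $\mbf{(A1)}$ reads $4 - (\tr(A))^2 > 0$, i.e.\ $(\tr(A))^2 < 4$. Combined with the positive diagonal entries, this gives $\det M > 0$ and $M$ positive definite.

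Finally I would invoke Lemma \ref{lem3.2}(1), which asserts $b_1^2 + (\ab)^2 \neq 0$ under the same assumptions $\mbf{(A1)}$ and $\mbf{(A2)}$; equivalently $(b_1, c) \neq (0,0)$. Positive definiteness of $M$ then forces the quadratic form to be strictly positive, completing the proof. There is no serious obstacle here: the only nontrivial input beyond elementary algebra is the already established Lemma \ref{lem3.2}(1), which rules out the degenerate case on which strict positivity could fail.
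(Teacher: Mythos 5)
Your proof is correct and follows essentially the same route as the paper: both expand the expression to $b_1^2 + (\ab)^2 + b_1(\ab)\,\mathrm{tr}(A)$, both invoke Lemma \ref{lem3.2}(1) to rule out $(b_1,\ab)=(0,0)$, and both use $|\mathrm{tr}(A)|<2$ from $\mbf{(A1)}$ and $\mbf{(A2)}$. The only difference is cosmetic: the paper closes with a short case analysis and the AM--GM-type bound $|b_1(\ab)\,\mathrm{tr}(A)|<2|b_1(\ab)|\le b_1^2+(\ab)^2$, whereas you package the same estimate as positive definiteness of the $2\times2$ Gram matrix, which is a slightly cleaner way to state the identical argument.
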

Lemma \ref{Sec5lem1} shows that
$\Lambda^h(\cdot)$ is  essentially smooth and lower semicontinuous. Then, using Theorem \ref{G-E}, we conclude that $\{B_N\}_{N\geq1}$ satisfies an LDP with the good rate function
\begin{align} 
J^h(y)=\frac{h\left[4-(\tr(A))^2\right]y^2}{4\alpha^2\left[(b_1+\ab)^2-b_1(\ab)(2-\tr(A))\right]}.\label{Jh(y)}
\end{align}

By Definition \ref{Def1.1}, the modified rate function is
\begin{equation}
J_{mod}^h(y)=\frac{\left(4-(\tr(A))^2\right)y^2}{4\alpha^2\left[(b_1+\ab)^2-b_1(\ab)(2-\tr(A))\right]}.
\end{equation}

In what follows, we study the asymptotical convergence of $J_{mod}^h(\cdot)$ as step-size $h$ tends to $0$ based on mean-square convergence condition. To this end, let condition $\mbf{(B)}$ hold. Then 
it follows from Lemma \ref{lem4.3} that
$(2-\tr(A))\sim h^2$, $(b_1+\ab)\sim h.$
In addition, $\mbf{(B)}$ implies that $b_1(\ab)\rightarrow 0$ as $h\rightarrow 0$. In this way, we have
\begin{align}
\lim_{h\to 0}J_{mod}^h(y)
=\frac{2+\lim_{h\to 0}\tr(A)}{4\alpha^2\left[\lim_{h\to 0}\frac{(b_1+\ab)^2}{2-\tr(A)}-\lim_{h\to 0}b_1(\ab)\right]}y^2 
=\frac{y^2}{\alpha^2}. \nonumber
\end{align}

According to the above results, we write them into the following theorem.
\begin{theo} \label{BNLDP}
	For the numerical method \eqref{Mthd} approximating the stochastic oscillator \eqref{Eq1}, if the assumptions $\mbf{(A1)}$ and $\mbf{(A2)}$ hold, then we have 
	
	$\mrm{(1)}$ The method \eqref{Mthd} is symplectic;
	
	$\mrm{(2)}$ The discrete mean velocity $\{B_N\}_{N\geq1}$ of  method \eqref{Mthd} satisfies an LDP with the good rate function
	\begin{equation*} 
	J^h(y)=\frac{h\left[4-(\rm{tr}(A))^2\right]y^2}{4\alpha^2\left[(b_1+\ab)^2-b_1(\ab)(2-\rm{tr}(A))\right]};
	\end{equation*}
	
	$\mrm{(3)}$ Moreover, if assumption $\mbf{(B)}$ holds, then method \eqref{Mthd} asymptotically preserves the LDP of $\{B_T\}_{T>0}$, i.e., the modified rate function $J^h_{mod}(y)=J^h(y)/h$ satisfies:
	\begin{equation*}
	\lim_{h\to 0}J^h_{mod}(y)=J(y),\qquad\forall\quad y\in\mbb R,
	\end{equation*}
	where $J(\cdot)$ is the rate function of the LDP for $\{B_T\}_{T>0}$.
\end{theo}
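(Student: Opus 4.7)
Proof plan for Theorem \ref{BNLDP}. The three assertions are essentially a repackaging of the analysis preceding the statement, so my strategy is to verify each one in order, citing the formulas already derived in Section \ref{Sec5} and the general symplecticity characterization from Section \ref{Sec3}.

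For part (1), I recall from the discussion of scheme \eqref{Mthd} that the one-step map is symplectic if and only if $\det(A)=1$. Under assumption $\mbf{(A2)}$ this is immediate, so (1) requires no further work.

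For part (2), I plan to invoke Theorem \ref{G-E} (Gärtner–Ellis) applied to $\{NB_N\}$. The first step is to use the explicit formula \eqref{xN} for $x_N$ together with $\det(A)=1$ to identify $x_N$ as a Gaussian random variable. Then the logarithmic moment generating function reduces to a computation of $\mbf E(x_N)$ and $\mbf{Var}(x_N)$. The expectation is $\mcal O(1)$ (bounded independently of $N$ thanks to $\det(A)=1$ and the bounded trigonometric factors in the expression for $\mbf E(x_N)$), so it contributes $0$ to the limit, which is the content of \eqref{ExN}. The variance has already been expanded in \eqref{VarxN}, and the two key trigonometric sums \eqref{5.3}–\eqref{5.4} are derived via Lemma \ref{lem1}. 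Summing everything gives \eqref{VARxN}, and after dividing by $N$ and letting $N\to\infty$ only the linear-in-$N$ terms survive, yielding
\begin{equation*}
\Lambda^h(\lambda)=\lim_{N\to\infty}\frac{1}{N}\log\mbf E e^{\lambda NB_N}=\frac{\alpha^2\lambda^2\bigl[(b_1+\ab)^2-b_1(\ab)(2-\tr(A))\bigr]}{(4-(\tr(A))^2)h},
\end{equation*}
as stated in \eqref{LambdaB}. This $\Lambda^h$ is a quadratic in $\lambda$, hence continuous and differentiable on all of $\mbb R$; essential smoothness requires only that its leading coefficient be strictly positive, which is precisely Lemma \ref{Sec5lem1}. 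With $\mcal D_{\Lambda^h}^\circ=\mbb R$ containing the origin and $\Lambda^h$ essentially smooth and lower semicontinuous, Theorem \ref{G-E} applies and gives an LDP with rate function $J^h(y)=\sup_{\lambda\in\mbb R}\{y\lambda-\Lambda^h(\lambda)\}$; since $\Lambda^h$ is a non-degenerate quadratic, the Fenchel–Legendre transform is elementary and produces exactly the formula \eqref{Jh(y)}.

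For part (3), I simply pass to the limit in the modified rate function $J^h_{mod}(y)=J^h(y)/h$. Under condition $\mbf{(B)}$, Lemma \ref{lem4.3} supplies $\tr(A)\to 2$, $(b_1+\ab)\sim h$, and $(1-\tr(A)+\det(A))\sim h^2$; combined with $\det(A)=1$ the last two yield $(2-\tr(A))\sim h^2$ and therefore $(b_1+\ab)^2/(2-\tr(A))\to \text{finite}$. Since $\mbf{(B)}$ also forces $b_1\to 0$ and $\ab\to 0$, the cross term $b_1(\ab)$ vanishes in the limit. Putting these together, the prefactor $(4-(\tr(A))^2)/4=(2-\tr(A))(2+\tr(A))/4$ combines with $(b_1+\ab)^2$ in the denominator to give $\lim_{h\to 0} J^h_{mod}(y)=y^2/\alpha^2=J(y)$, matching Theorem \ref{LDP for BT}.

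The conceptually non-trivial step is the essential-smoothness check in part (2), since without the strict positivity in Lemma \ref{Sec5lem1} the Gärtner–Ellis hypothesis would fail and $\Lambda^h$ would degenerate to the zero function, yielding a useless LDP with trivial rate function. The remainder of the argument is a direct adaptation of the mean-position analysis carried out in Sections \ref{Sec3}–\ref{Sec4}, and the computations parallel those already completed there.
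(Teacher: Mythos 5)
Your proposal is correct and follows essentially the same route as the paper: use the explicit Gaussian form of $x_N$ under $\mbf{(A2)}$, compute $\Lambda^h$ via \eqref{ExN}–\eqref{VARxN} and Lemma \ref{lem1}, verify essential smoothness by Lemma \ref{Sec5lem1}, apply Theorem \ref{G-E} to get \eqref{Jh(y)}, and pass to the $h\to 0$ limit of $J^h_{mod}$ using Lemma \ref{lem4.3}. No meaningful deviation from the paper's argument.
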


\subsection{LDP of $\{B_N\}_{N\geq1}$ for non-symplectic methods}
In this part, we consider the discrete mean velocity $\{B_N\}_{N\geq1}$ of general non-symplectic methods. We study whether the LDP holds for $\{B_N\}_{N\geq1}$. Let conditions $\mbf{(A1)}$ and $\mbf{(A3)}$ hold. Then, \eqref{5.1} and \eqref{5.2} satisfy, respectively,
\begin{gather*}
\sum_{n=0}^{N-2}\hat{\alpha}^2_n\leq K(\theta)\sum_{n=0}^{N-2}\left(\det(A)\right)^n\leq K(\theta),\\
\left|2\sum_{n=1}^{N-1}\hat{\alpha}_n\hat{\alpha}_{n-1}\right|\leq K(\theta)\sum_{n=1}^{N-1}\left(\det(A)\right)^{\frac{2n-1}{2}}\leq K(\theta).
\end{gather*}
Additionally, it holds that $\left|\hat{\alpha}_{N-1}\right|=\left|\frac{\left(\det(A)\right)^{N-1}\sin^2(N\theta)}{\sin^2(\theta)}\right|\leq K(\theta)$. Thus, \eqref{VarxN} satisfies 
\begin{equation} \label{521}
\left|\mbf{Var}(x_N)\right|\leq \alpha^2hK(\theta).
\end{equation}
It follows from \eqref{ExN} and \eqref{521} that
the logarithmic moment generating function is
\begin{equation}
\widetilde\Lambda^h(\lambda)=\lim_{N\to\infty}\frac{1}{N}\log\mbf Ee^{\lambda NB_N}=\lim_{N\to\infty}\left[\frac{\lambda}{N}\mbf E(x_N)+\frac{\lambda^2}{2h^2}\mbf{Var}(x_N)\right]=0.
\end{equation}

We note that $\widetilde\Lambda^h(\cdot)$ is not essentially smooth, for which Theorem \ref{G-E} is not valid.  
In our case, we can directly prove that the LDP holds for $\{B_N\}_{N\geq1}$ of non-symplectic methods by the definition of LDP. We claim that $\{B_N\}_{N\geq1}$ of non-symplectic methods satisfy the LDP with the good rate function:
\begin{equation}\label{66k2}
\tilde{J}^h(y)=\left\{
\begin{split}
0, \qquad &y=0,\\
+\infty,\qquad &y\neq0.
\end{split}
\right.
\end{equation}
We divide the proof of this claim into three steps.

\emph{Step 1: We show the limit behavior of $P\left(B_N\geq x_0\right)$ and $P\left(B_N\leq x_0\right)$ for non-symplectic methods.} \\
We need to use the following fact:
if $X\sim\mcal N(\mu,\sigma^2)$, then it follows from \cite[Lemma 22.2]{Achim} that, for any $x>\mu$,
\begin{equation} \label{Nmu}
\mbf P\left(X\geq x
\right)=\mbf P\left(\frac{X-\mu}{\sigma}\geq \frac{x-\mu}{\sigma}
\right)\leq\frac{1}{\sqrt{2\pi}}\frac{\sigma}{x-\mu}e^{-\frac{(x-\mu)^2}{2\sigma^2}}.
\end{equation} 
In addition, for any $x<\mu$,
\begin{equation} \label{Nmu2}
\mbf P\left(X\leq x
\right)=\mbf P\left(\frac{X-\mu}{\sigma}\leq \frac{x-\mu}{\sigma}
\right)=\mbf P\left(\frac{X-\mu}{\sigma}\geq -\frac{x-\mu}{\sigma}
\right)\leq\frac{1}{\sqrt{2\pi}}\frac{\sigma}{\mu-x}e^{-\frac{(x-\mu)^2}{2\sigma^2}}.
\end{equation}

Since $B_N=\frac{x_N}{Nh}$, we have $B_N\sim\mcal N\left(\frac{\mbf E(x_N)}{Nh},\frac{\mbf{Var}(x_N)}{N^2h^2}\right)$ with $\left|\mbf E(x_N)\right|\leq K(\theta)$ and $\left| \mbf{Var}(x_N)\right|\leq K(\theta)$.
Noting that $\underset{N\to\infty}{\lim}\mbf E(B_N)=0$, one has that
for the given $x_0>0$, there exists some $N_0$ such that  $\mbf E(B_N)<x_0$ for every $N>N_0$. Accordingly, it follows from \eqref{Nmu} that
\begin{equation*}
\mbf P\left(B_N\geq x_0\right)\leq \frac{1}{\sqrt{2\pi}}\frac{\sqrt{\mbf{Var}(x_N)}}{Nhx_0-\mbf E(x_N)}\exp\left\{-\frac{\left(Nhx_0-\mbf E(x_N)\right)^2}{2\mbf{Var}(x_N)}\right\},\qquad \forall \quad N>N_0.
\end{equation*}
In this way, for every $x_0>0$,
\begin{equation} \label{523}
\lim_{N\to\infty}\frac{1}{N}\log\left[\mbf P\left(B_N\geq x_0\right)\right]=-\infty.
\end{equation}
Analogously, using \eqref{Nmu2}, one has that for the given $x_0<0$, 
\begin{equation} \label{66k1}
\lim_{N\to\infty}\frac{1}{N}\log\left[\mbf P\left(B_N\leq x_0\right)\right]=-\infty.
\end{equation}

\emph{Step 2: We prove the upper bound LDP (LDP2): For every closed  $C\subset\mbb R$,
	\begin{align}\label{66k3}
	\underset{N\to\infty}{\limsup}\frac{1}{N}\log\mbf P(B_N\in C)\leq-\inf \tilde J^h(C).
	\end{align}
}

If $0\in C$, then it follows from \eqref{66k2} $\inf\tilde J^h(C)=0$. Since $\mbf P(B_N\in C)\leq1$, \eqref{66k3} naturally holds.

If $0\notin C$. Define $x_+=\inf\left(C\bigcap(0,+\infty)\right)$ and $x_-=\sup\left(C\bigcap(-\infty,0)\right)$. Then, $\mbf P(B_N\in C)\leq \mbf P(B_N\geq x_+)+\mbf P(B_N\leq x_-)$. In order to prove \eqref{66k3}, we need to use the following lemma (see \cite[Lemma 23.9]{Achim}).
\begin{lem}\label{lem5.1}
	Let $N\in\mathbb{N}$ and let $a_\epsilon^i$, $i=1,\ldots,N$, $\epsilon>0$, be nonnegative numbers. Then
	$	\limsup_{\epsilon\to 0}\epsilon\log\sum_{i=1}^{N}a_\epsilon^i=\underset{i=1,\ldots,N}{\max}\limsup_{\epsilon\to 0}\epsilon\log(a_\epsilon^i).$	
\end{lem}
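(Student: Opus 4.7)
The plan is to establish the lemma via the elementary two-sided sandwich
\[
\max_{1\le i\le N} a_\epsilon^i \;\le\; \sum_{i=1}^{N} a_\epsilon^i \;\le\; N\,\max_{1\le i\le N} a_\epsilon^i,
\]
which holds for any nonnegative numbers $a_\epsilon^1,\ldots,a_\epsilon^N$. Applying $\epsilon\log(\cdot)$ (with the convention $\log 0=-\infty$, $\epsilon\cdot(-\infty)=-\infty$) preserves the inequalities because $\epsilon>0$ and $\log$ is monotone. Using the fact that $\log$ commutes with $\max$ on $[0,\infty)$, we obtain
\[
\max_{1\le i\le N}\epsilon\log a_\epsilon^i \;\le\; \epsilon\log\sum_{i=1}^{N} a_\epsilon^i \;\le\; \epsilon\log N + \max_{1\le i\le N}\epsilon\log a_\epsilon^i.
\]

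Next, I would take $\limsup_{\epsilon\to 0}$ throughout. Since $\epsilon\log N\to 0$ as $\epsilon\to 0$, both bounds yield
\[
\limsup_{\epsilon\to 0}\epsilon\log\sum_{i=1}^{N} a_\epsilon^i \;=\; \limsup_{\epsilon\to 0}\,\max_{1\le i\le N}\epsilon\log a_\epsilon^i.
\]
It remains to interchange $\limsup$ with the finite maximum. The inequality $\ge$ is trivial: for each fixed $j$, $\max_i \epsilon\log a_\epsilon^i \ge \epsilon\log a_\epsilon^j$, so $\limsup_{\epsilon\to 0}\max_i\epsilon\log a_\epsilon^i \ge \limsup_{\epsilon\to 0}\epsilon\log a_\epsilon^j$, and taking $\max$ over $j$ gives the bound.

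The main (and only non-routine) step is the reverse inequality, which I would prove by a pigeonhole argument exploiting the finiteness $N<\infty$. Set $L:=\max_{1\le j\le N}\limsup_{\epsilon\to 0}\epsilon\log a_\epsilon^j$, and suppose for contradiction that $\limsup_{\epsilon\to 0}\max_i \epsilon\log a_\epsilon^i > L$. Then there exist $\delta>0$ and a sequence $\epsilon_k\downarrow 0$ with $\max_i \epsilon_k\log a_{\epsilon_k}^i > L+\delta$ for all $k$. For each $k$ choose an index $i_k$ attaining this maximum. Since $i_k\in\{1,\ldots,N\}$ takes only finitely many values, some index $i_\ast$ is selected along an infinite subsequence $\{\epsilon_{k_\ell}\}$, giving $\epsilon_{k_\ell}\log a_{\epsilon_{k_\ell}}^{i_\ast}>L+\delta$ for all $\ell$, hence $\limsup_{\epsilon\to 0}\epsilon\log a_\epsilon^{i_\ast}\ge L+\delta>L$, contradicting the definition of $L$. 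This closes the argument. Throughout, the edge case $a_\epsilon^i=0$ is handled harmlessly by the extended-real conventions, since a value of $-\infty$ never contributes to any of the maxima above once some other $a_\epsilon^j$ is positive, and if all $a_\epsilon^i$ vanish both sides are $-\infty$.
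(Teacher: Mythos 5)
Your argument is correct. The paper itself gives no proof of this lemma; it simply cites it as \cite[Lemma 23.9]{Achim} (Klenke's textbook), so there is nothing to compare against. Your proof is the canonical one for what is sometimes called the principle of the largest term: the sandwich $\max_i a_\epsilon^i \le \sum_i a_\epsilon^i \le N\max_i a_\epsilon^i$ reduces the sum to a finite max up to an $\epsilon\log N$ error that vanishes, and the pigeonhole step along a subsequence realizing the $\limsup$ is exactly the standard device for exchanging $\limsup$ with a finite max (the reverse inequality being trivial). The extended-real conventions for $\log 0 = -\infty$ are correctly tracked, so the edge cases are fine. This is a complete and correct proof, essentially identical to the one found in standard large-deviations references such as Klenke or Dembo--Zeitouni.
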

Using \eqref{523}， \eqref{66k1} and Lemma \ref{lem5.1} yields
\begin{align*}
\limsup_{N\to\infty}\frac{1}{N}\log\mbf P(B_N\in C)\leq \max\left\{\limsup_{N\to\infty}\frac{1}{N}\log\mbf P(B_N\geq x_+),~\limsup_{N\to\infty}\frac{1}{N}\log\mbf P(B_N\leq x_-)\right\}=-\infty.
\end{align*}
Noting that $0\notin C$, one obtains $\inf\tilde J^h(C)=+\infty$. Thus, \eqref{66k3} also holds for this case.

\emph{Step 3: We prove the lower bound LDP (LDP1): For every open $U\subset\mbb R$,
	\begin{align}\label{66k4}
	\underset{N\to\infty}{\liminf}\frac{1}{N}\log\mbf P(B_N\in U)\geq-\inf \tilde J^h(U).
	\end{align}
}

If $0\notin U$, then $\inf\tilde J^h(U)=+\infty$. Since $\mbf P(B_N\in C)\geq 0$, \eqref{66k4} naturally holds.

If $0\in U$, then there exists some $\delta>0$ such that $(-\delta,\delta)\subset U$. Accordingly, 
\begin{align}\label{66k5}
\liminf_{N\to\infty}\frac{1}{N}\log\mbf P(B_N\in U)\geq \liminf_{N\to\infty}\frac{1}{N}\log\mbf P(|B_N|<\delta).
\end{align}
It follows from \eqref{523} that for arbitrary given $M\in(-\infty,0)$, there exists some $N_1$ such that for every $N>N_1$,
$\frac{1}{N}\log\left[\mbf P\left(B_N\geq \delta\right)\right]<M$. Thus, 
\begin{align*}
\mbf P\left(B_N\geq \delta\right)\leq e^{NM},\qquad \forall \quad N>N_1,
\end{align*}
which leads to $\underset{N\to\infty}{\lim}\mbf P(B_N\geq\delta)=0$. Similarly, utilizing \eqref{66k1} gives  $\underset{N\to\infty}{\lim}\mbf P(B_N\leq-\delta)=0$.
Hence, $\underset{N\to\infty}{\lim}\mbf P(|B_N|<\delta)=1$, which implies
\begin{align}\label{66k6}
\lim_{N\to\infty}\frac{1}{N}\log\mbf P(|B_N|<\delta)=0.
\end{align}
Combining \eqref{66k5} and \eqref{66k6}, we have
$\liminf_{N\to\infty}\frac{1}{N}\log\mbf P(B_N\in U)\geq0.$
Further, since $0\in U$, $\inf\tilde J^h(U)=0$. Hence, we prove \eqref{66k4}.

Combining the above discussion, we deduce that $\{B_N\}_{N\geq1}$ of non-symplectic methods satisfy the LDP with the good rate function $\tilde{J}^h$ given by \eqref{66k2} and the modified rate function $\tilde{J}_{mod}^h=\tilde{J}^h/h=\tilde{J}^h$. Finally, we get the following theorem.
\begin{theo} \label{BNLDP2}
	For the numerical method \eqref{Mthd} approximating the stochastic oscillator \eqref{Eq1}, if the assumptions $\mbf{(A1)}$ and $\mbf{(A3)}$ hold, then we have 
	
	$\mrm{(1)}$ The method \eqref{Mthd} is non-symplectic;
	
	$\mrm{(2)}$ The discrete mean velocity $\{B_N\}_{N\geq1}$ of  method \eqref{Mthd} satisfies an LDP with the good rate function
	$\tilde{J}^h(y)=
	\begin{cases}
	0, \qquad &y=0,\\
	+\infty,\qquad &y\neq0;
	\end{cases}$
	
	$\mrm{(3)}$ Method \eqref{Mthd} does not asymptotically preserve the LDP of $\{B_T\}_{T>0}$, i.e.,  for  $y\neq 0$,
	$	\lim_{h\to 0}\tilde J^h_{mod}(y)\neq J(y),$
	where  $\tilde J^h_{mod}(y)=\tilde J^h(y)/h$, and $J(y)=\frac{y^2}{\alpha^2}$ is the rate function of LDP for $\{B_T\}_{T>0}$.
\end{theo}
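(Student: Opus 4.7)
The plan has three parts corresponding to the three conclusions.

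For (1), the claim is immediate from the characterization recalled in Section \ref{Sec3}: method \eqref{Mthd} preserves symplecticity if and only if $\det(A)=1$. Since $\mbf{(A3)}$ posits $0<\det(A)<1$, the method is non-symplectic.

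For (2), my plan is to exploit the geometric decay $(\det(A))^{n/2}\to 0$ in the closed-form expression \eqref{xN} for $x_N$. Using the explicit formulas for $\hat\alpha_n,\hat\beta_n$ together with $|\sin(\cdot)|\le 1$ and a geometric-series bound, I first show that $|\mbf E[x_N]|\le K(\theta,x_0,y_0)$ and $\mbf{Var}(x_N)\le \alpha^2 h K(\theta)$ uniformly in $N$; this relies on the estimates $\sum_{n\geq 0}\hat\alpha_n^2\le K(\theta)$, $\sum_{n\ge 1}|\hat\alpha_n\hat\alpha_{n-1}|\le K(\theta)$, and $|\hat\alpha_{N-1}|\le K(\theta)$, all direct consequences of $\det(A)<1$. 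Since $B_N=x_N/(Nh)$ is Gaussian, this forces $\mbf E[B_N]=O(1/N)$ and $\mbf{Var}(B_N)=O(1/N^2)$, so $B_N$ concentrates sharply at $0$.

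To promote this concentration into an LDP with the degenerate rate function $\tilde J^h$, I would \emph{avoid} the G\"artner--Ellis route: the logarithmic moment generating function is $\widetilde\Lambda^h\equiv 0$, whose Fenchel--Legendre transform equals $\tilde J^h$, but the constant function is a borderline case for essential smoothness, so direct verification of (LDP$1$) and (LDP$2$) from Definition \ref{LDP} is cleaner. The main tool is the Gaussian tail estimate
\begin{equation*}
\mbf P(X\ge x)\le \frac{1}{\sqrt{2\pi}}\frac{\sigma}{x-\mu}\exp\!\left(-\frac{(x-\mu)^2}{2\sigma^2}\right),\qquad X\sim\mcal N(\mu,\sigma^2),\ x>\mu,
\end{equation*}
and its reflected version. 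Applied to $B_N$ with $\mu=O(1/N)$, $\sigma=O(1/N)$ and fixed $x\ne 0$, the exponent behaves like $-N^2 x^2/K(\theta)$, yielding $\frac{1}{N}\log\mbf P(B_N\ge x)\to-\infty$ for $x>0$ (and likewise for $x<0$). For (LDP$2$) on a closed $C$: if $0\in C$ then $\inf\tilde J^h(C)=0$ and the bound is trivial; if $0\notin C$, split $C$ into its positive and negative parts with $x_\pm=\inf(C\cap(0,\infty)),\ \sup(C\cap(-\infty,0))$, and apply the standard max-log-sum lemma to conclude $\limsup_N \frac{1}{N}\log\mbf P(B_N\in C)=-\infty=-\inf\tilde J^h(C)$. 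For (LDP$1$) on an open $U$: if $0\notin U$ then $\inf\tilde J^h(U)=+\infty$, trivially; if $0\in U$, pick $\delta>0$ with $(-\delta,\delta)\subset U$, and the same tail bound gives $\mbf P(|B_N|<\delta)\to 1$, so $\frac{1}{N}\log\mbf P(B_N\in U)\to 0=-\inf\tilde J^h(U)$.

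For (3), the modified rate function $\tilde J^h_{mod}=\tilde J^h/h$ still equals $+\infty$ at every $y\ne 0$, independently of $h$, whereas Theorem \ref{LDP for BT} gives $J(y)=y^2/\alpha^2<\infty$; hence $\lim_{h\to 0}\tilde J^h_{mod}(y)\ne J(y)$ for $y\ne 0$, and asymptotic preservation fails. The main obstacle is part (2): because $\widetilde\Lambda^h\equiv 0$, the standard G\"artner--Ellis pathway is degenerate, so the substitute must be the direct tail-bound argument above, which crucially depends on the uniform-in-$N$ boundedness of $\mbf E[x_N]$ and $\mbf{Var}(x_N)$ — a phenomenon specific to the non-symplectic case $\det(A)<1$.
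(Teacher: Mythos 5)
Your proposal is correct and takes essentially the same route as the paper: geometric decay from $\det(A)<1$ gives uniform-in-$N$ bounds on $\mbf E[x_N]$ and $\mbf{Var}(x_N)$, so $\widetilde\Lambda^h\equiv 0$ is degenerate for G\"artner--Ellis and you instead verify (LDP1)/(LDP2) directly via the Gaussian tail estimate and the max-log-sum lemma, with the same case splits on $0\in C$ versus $0\notin C$ and $0\in U$ versus $0\notin U$. Part (3) is then immediate in both treatments, since $\tilde J^h_{mod}=\tilde J^h/h$ equals $+\infty$ on $\mbb R\setminus\{0\}$ for every $h$.
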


\vspace{5mm}
\section{Concrete numerical methods} \label{Sec6}
In this section, we show and compare the LDPs of some concrete numerical methods to verify the theoretical results obtained in previous sections. For symplectic methods, we consider symplectic $\beta$-method, Exponential method, INT method and OPT method. For non-symplectic ones, we examine $\theta$-method, PC (PEM-MR) method and PC (EM-BEM) method. All of the methods can be found in \cite{Tocino15}, except  symplectic $\beta$-method (see e.g., (2.7) in \cite{MilsteinBook}). Furthermore, we construct some symplectic methods which preserve the LDP for $\{A_T\}_{T>0}$ or $\{B_T\}_{T>0}$ exactly.
\subsection{Symplectic methods} \label{Sec6.1}
$\\$

$\bullet$ Symplectic $\beta$-method ($\beta\in[0,1]$):
\begin{equation*} 
A^{\beta}=\frac{1}{1+\beta(1-\beta)h^2}\left(\begin{array}{cc}
1-(1-\beta)^2h^2&h\\
-h&1-\beta^2h^2
\end{array}\right),~
b^{\beta}=\frac{1}{1+\beta(1-\beta)h^2}\left(\begin{array}{c}
(1-\beta)h\\1
\end{array}\right).
\end{equation*}
The straightforward calculation leads to
\begin{gather} 
\det(A^{\beta})=1,\qquad\tr(A^{\beta})=\frac{2-(2\beta^2-2\beta+1)h^2}{1+\beta(1-\beta)h^2}, \label{5beta1}
\\
\ab=\frac{\beta h}{1+\beta(1-\beta)h^2},\qquad b_1+\ab=\frac{h}{1+\beta(1-\beta)h^2}.\label{5beta2}
\end{gather}

It can be verified that condition $\mbf{(B)}$ holds, and if $h\in(0,2)$, then for every $\beta\in[0,1]$, conditions $\mbf{(A1)}$ and $\mbf{(A2)}$ hold. 
Substituting \eqref{5beta1} and \eqref{5beta2} into \eqref{Ih1}, we have
$I^h(y)=\frac{hy^2}{3\alpha^2}\left[\frac{3}{2}-\frac{3}{6-(2\beta-1)^2h^2}\right]$,
which is the good rate function of LDP for $\{A_N\}_{N\geq1}$ of symplectic $\beta$-method by Thoerem \ref{MLDP1}.
Furthermore, we get the modified rate function
$I^h_{mod}(y)=I^h(y)/h=\frac{y^2}{3\alpha^2}\left[\frac{3}{2}-\frac{3}{6-(2\beta-1)^2h^2}\right]$.

Further, we have that $\lim_{h\to 0}I^h_{mod}(y)=I(y)=\frac{y^2}{3\alpha^2}$, for every $y\in\mbb R$, which is consistent with the third conclusion of Theorem \ref{MLDP1}. Moreover, for every $h>0,$ the modified rate function of the mean position for the midpoint method with $\beta=\frac{1}{2}$ is same as that for the exact solution. These indicate that midpoint method exactly preserves the LDP for $\{A_T\}_{T>0}$. In case of $\beta\neq\frac{1}{2}$, $I^h_{mod}(y)<I(y)$ provided $y\neq0$.   That is, as the time $T$ and $t_N$ tend to infinity simultaneously, the exponential decay speed of $\mbf P(A_N\in[a,a+\ud a])$ is slower than that of $\mbf P\left(A_T\in[a,a+\ud a]\right)$ provided $a\neq0$. 

On the other hand, if $h\in(0,2)$ and $\beta\in(0,1)$, conditions $\mbf{(A1)}$ and $\mbf{(A2)}$   hold. By Theorem \ref{BNLDP},  $\{B_N\}_{N\geq1}$ of symplectic $\beta$-method satisfies an LDP with the good rate function $J^h(y)=\frac{h\left[4-(2\beta-1)^2h^2\right]\left[1+\beta(1-\beta)h^2\right]y^2}{4\alpha^2}$. This means that the modified rate function $J^h_{mod}(\cdot)$ satisfies $\lim_{h\to 0}J^h_{mod}(y)=\frac{y^2}{\alpha^2}=J(y)$, which verifies the third conclusion of Theorem \ref{BNLDP}.
\\

$\bullet$Exponential method (EX):
$ A^{EX}=\left(\begin{array}{cc}
\cos(h)&\sin(h)\\
-\sin(h)&\cos(h)
\end{array}\right), \quad
b^{EX}=\left(\begin{array}{c}
0\\1
\end{array}\right).$\\
For this method, it holds that
\begin{gather*}
\det(A^{EX})=1,~\tr(A^{EX})=2\cos(h),~\ab=\sin(h),~ b_1+\ab=\sin(h).
\end{gather*}
If $h\in(0,\pi)$, then the conditions $\mbf{(A1)}$ and $\mbf{(A2)}$   hold. Then, we obtain that $\{A_N\}_{N\geq1}$ satisfies an LDP with the modified rate function
$I^h_{mod}(y)=\frac{2y^2}{\alpha^2}\frac{1-\cos(h)}{h^2(2+\cos(h))}.$
Hence, we have $\lim_{h\to 0}I^h_{mod}(y)=\frac{y^2}{3\alpha^2}=I(y)$. One can show that $I^h_{mod}(y)>I(y)$ provided that $h\in(0,\pi/6)$ and $y\neq0$.


According to the discussions above, if $h\in(0,\pi/6)$, then the mean position $\{A_N\}_{N\geq1}$ of exponential method satisfies an LDP, which asymptotically preserves the LDP for $\{A_T\}_{T>0}$. In addition, as the time $T$ and $t_N$ tend to infinity simultaneously, the exponential decay speed of $\mbf P(A_N\in[a,a+\ud a])$ is faster than that of $\mbf P\left(A_T\in[a,a+\ud a]\right)$ provided that $a\neq0$.

Analogously, we have that conditions $\mbf{(A1)}$ and $\mbf{(A2)}$ hold for $h\in(0,\pi)$. Hence, for $h\in(0,\pi)$, $\{B_N\}_{N\geq1}$ of  exponential method satisfies an LDP with the modified rate function $J_{mod}^h(y)=\frac{y^2}{\alpha^2}=J(y)$. In this way, exponential method exactly preserves the LDP for $\{B_T\}_{T>0}$. 
\\

$\bullet$ Integral method (INT):
$
A^{INT}=\left(\begin{array}{cc}
\cos(h)&\sin(h)\\
-\sin(h)&\cos(h)
\end{array}\right),~
b^{INT}=\left(\begin{array}{c}
\sin(h)\\\cos(h)
\end{array}\right).
$\\
For this method, $\det(A^{INT})=1$, $\tr(A^{INT})=2\cos(h)$, $\ab=0$ and $b_1+\ab=\sin(h)$. It is shown that its modified rate functions of $\{A_N\}_{N\geq1}$ and $\{B_N\}_{N\geq1}$ are 
$I^h_{mod}(y)=\frac{2y^2}{\alpha^2}\frac{1-\cos(h)}{h^2(2+\cos(h))}$, and  $J_{mod}^h(y)=\frac{y^2}{\alpha^2}=J(y)$, respectively.
This case is exactly the same as that of exponential method. 
\\

$\bullet$ Optimal method (OPT):
$
A^{OPT}=\left(\begin{array}{cc}
\cos(h)&\sin(h)\\
-\sin(h)&\cos(h)
\end{array}\right), ~
b^{OPT}=\frac{1}{h}\left(\begin{array}{c}
2\sin^2(\frac{h}{2})\\\sin(h)
\end{array}\right).
$\\
Based on the above two formulas, one has
\begin{gather*}
\det(A^{OPT})=1,\qquad\tr(A^{OPT})=2\cos(h),\qquad\ab=b_1=\frac{1-\cos(h)}{h}.
\end{gather*}
If $h\in(0,\pi)$, then assumptions $\mbf{(A1)}$ and $\mbf{(A2)}$  hold such that $\{A_N\}_{N\geq1}$ of optimal method satisfies an LDP with the modified rate function $I^h_{mod}(y)=\frac{y^2}{3\alpha^2}=I(y)$. Thus, we conclude that the LDP for mean position $\{A_N\}_{N\geq1}$ of  optimal method exactly preserves the LDP for $\{A_T\}_{T>0}$.

The assumptions $\mbf{(A1)}$ and $\mbf{(A2)}$  hold provided that $h\in(0,\pi)$. Thus, for $h\in(0,\pi)$, $\{B_N\}_{N\geq1}$ of optimal method satisfies an LDP with the modified rate function $J_{mod}^h(y)=\frac{h^2y^2}{2(1-\cos(h))\alpha^2}$. Further, we have that $\lim_{h\to 0}J_{mod}^h(y)=\frac{y^2}{\alpha^2}=J(y)$ and $J_{mod}^h(y)>J(y)$. Hence, optimal method asymptotically preserves the LDP for $\{B_T\}_{T>0}$. When the time $T$ and $t_N$ tend to infinity simultaneously, the exponential decay speed of $\mbf P(B_N\in[a,a+\ud a])$ is faster than that of $\mbf P\left(B_T\in[a,a+\ud a]\right)$ provided $a\neq0$.
\subsection{Non-symplectic methods}\label{Sec6.2}
$\\$

$\bullet$ stochastic $\theta$-method ($\theta\in[0,1/2)\cup(1/2,1]$):
\begin{equation*} 
A^{\theta}=\frac{1}{1+\theta^2h^2}\left(\begin{array}{cc}
1-(1-\theta)\theta h^2&h\\
-h&1-(1-\theta)\theta h^2
\end{array}\right), \qquad
b^{\theta}=\frac{1}{1+\theta^2h^2}\left(\begin{array}{c}
\theta h\\1
\end{array}\right).
\end{equation*}
For this method, we have
\begin{gather*}
\det(A^{\theta})=\frac{1+(1-\theta)^2h^2}{1+\theta^2h^2},~1-\tr(A^{\theta})+\det(A^{\theta})=\frac{h^2}{1+\theta^2h^2},~ b_1+\ab=\frac{h}{1+\theta^2h^2}.
\end{gather*}
Notice that $0<\det(A^{\theta})<1$ is equivalent to $\theta\in(1/2,1]$.
One can show that, with $\theta\in(1/2,1]$, $\mbf{(A1)}$, $\mbf{(A3)}$ and $\mbf{(A4)}$
hold for every $h>0$. Hence, for every $\theta\in(1/2,1]$ and $h>0$, the mean position $\{A_N\}_{N\geq1}$ satisfies an LDP with the modified rate function $\widetilde{I}^h_{mod}(y)=\frac{y^2}{2\alpha^2}$, which verifies the third conclusion of Theorem \ref{MLDP2}.
\\

$\bullet$  PC (PEM-MR):
$
A^{1}=\left(\begin{array}{cc}
1-h^2/2&h(1-h^2/2)\\
-h&1-h^2/2
\end{array}\right), \qquad
b^{1}=\left(\begin{array}{c}
h/2\\1
\end{array}\right).$\\
One has that $1-\tr(A^1)+\det(A^1)=h^2-\frac{h^4}{4}$ and $b_1+\ab=h-\frac{h^3}{4}$. We obtain that $\mbf{(A1)}$, $\mbf{(A3)}$ and $\mbf{(A4)}$ hold, provided $h\in(0,\sqrt{2})$. Thus, by Theorem \ref{MLDP2}, $\{A_N\}_{N\geq1}$ of this method satisfies an LDP with the modified rate function
$\widetilde{I}^h_{mod}(y)=\frac{y^2}{2\alpha^2}$.
\\

$\bullet$  PC (EM-BEM):
$A^{2}=\left(\begin{array}{cc}
1-h^2&h\\
-h&1-h^2
\end{array}\right),~
b^{2}=\left(\begin{array}{c}
h\\1
\end{array}\right)$,
which means that $1-\tr(A^2)+\det(A^2)=h^2+h^4$ and $b_1+\ab=h+h^3$. In this case, $\mbf{(A1)}$, $\mbf{(A3)}$ and $\mbf{(A4)}$ hold, provided $h\in(0,1)$. Thus, by Theorem \ref{MLDP2}, $\{A_N\}_{N\geq1}$ of this method satisfies an LDP with the modified rate function
$\widetilde{I}^h_{mod}(y)=\frac{y^2}{2\alpha^2}$.

We observe that all methods shown in Sections \ref{Sec6.1} and  \ref{Sec6.2} satisfy the condition  $\mbf{(B)}$. When the step-size $h$ is sufficiently small, the symplectic methods in Section \ref{Sec6.1} satisfy the conditions $\mbf{(A1)}$ and $\mbf{(A2)}$, and the non-symplectic methods in Section \ref{Sec6.2} satisfy the conditions $\mbf{(A1)}$, $\mbf{(A3)}$ and $\mbf{(A4)}$. 
By studying these methods, we verify the theoretical results in Theorems \ref{MLDP1}, \ref{MLDP2} and \ref{BNLDP}. It is shown that symplectic methods are superior to non-symplectic methods  in terms of preservation of the LDP for both $\{A_T\}_{T>0}$ and $\{B_T\}_{T>0}$.
\subsection{Construction for methods exactly preserving the LDP for $\{A_T\}_{T>0}$ or $\{B_T\}_{T>0}$} \label{Sec6.3}
In this part, we  construct several symplectic methods exactly preserving the LDP for $\{A_T\}_{T>0}$  (resp. $\{B_T\}_{T>0}$) based on Theorem \ref{MLDP1} (resp. Theorem \ref{BNLDP}). 

$\bullet$ Methods exactly preserving the LDP for $\{A_T\}_{T>0}$:

Motivated by assumption  $\mbf{(B)}$, we consider the method \eqref{Mthd} with
\begin{equation} \label{5method1}
A=\left(\begin{array}{cc}
1+c_{11}h^2&h+c_{12}h^2\\
-h+c_{21}h^2&1+c_{22}h^2
\end{array}\right), \qquad
b=\left(\begin{array}{c}
D_1h\\1+D_2h
\end{array}\right)
\end{equation}
with real constants $c_{ij}$ and $D_{i}$, $i,j=1,2$, independent of $h$. In order to make the condition $\det(A)=1$ hold, we have
$$(1+c_{11}h^2)(1+c_{22}h^2)=1+(h+c_{12}h^2)(-h+c_{21}h^2),\quad\forall\quad h>0.$$ 
Comparing the coefficients and we obtain
\begin{gather*}
c_{11}+c_{22}=-1,\quad c_{11}c_{22}=c_{12}c_{21},\quad c_{12}=c_{21}.
\end{gather*}
Let $c_{12}=c_{21}=\sigma$, then $c_{11}$ and $c_{22}$ are the roots of equation $x^2+x+\sigma^2=0$. To assure that $c_{11}$ and $c_{22}$ are real numbers, we assume $\sigma\in[-1/2,1/2]$.
Solving the equation $x^2+x+\sigma^2=0$ yields
$
c_{11}=\frac{-1-\sqrt{1-4\sigma^2}}{2}$, $c_{22}=\frac{-1+\sqrt{1-4\sigma^2}}{2}$ or $c_{11}=\frac{-1+\sqrt{1-4\sigma^2}}{2}$, $c_{22}=\frac{-1-\sqrt{1-4\sigma^2}}{2},$
where the case $c_{11}=c_{22}=-1/2$, $\sigma=\pm1/2$ is included in the above two cases.
In order to acquire the methods exactly preserving the LDP for $\{A_T\}_{T>0}$, a necessary condition is that the modified rate function \eqref{mod1} satisfies
\begin{equation} \label{5modI}
I_{mod}^h(y)=\frac{(2+\mrm{tr}(A))(2-\mrm{tr}(A))^2y^2}{2\alpha^2h^2\left[(b_1+\ab)^2(4+\mrm{tr}(A))-2b_1(\ab)(2-\mrm{tr}(A))\right]}=\frac{y^2}{3\alpha^2}.
\end{equation} 
According to \eqref{5method1}, it is known that
\begin{gather*} 
\tr(A)=2-h^2,\quad \ab=h\left[(1-D_1)+(D_2+\sigma )h+(D_2\sigma-D_1c_{22})h^2\right].
\end{gather*}
Substituting the above equation into \eqref{5modI}, we have
\begin{align} \label{key2}
6-\frac{3h^2}{2}=&\left[1+(D_2+\sigma)h+(D_2\sigma-D_1c_{22})h^2\right]^2(6-h^2) \nonumber \\
\phantom{=}&-2D_1h^2\left[1-D_1+(D_2+\sigma)h+(D_2\sigma-D_1c_{22})h^2\right].
\end{align}
By comparing the coefficients of $h^6$ and $h^4$ in \eqref{key2} and some direct computation, we finally obtain  
\begin{equation*} \label{key7}
D_1=\frac{1}{2},\quad\sigma=0,\pm\frac{1}{2},\quad c_{22}=\frac{-1+\sqrt{1-4\sigma^2}}{2},\quad c_{11}=\frac{-1-\sqrt{1-4\sigma^2}}{2},\quad D_2=-\sigma.
\end{equation*}

Finally, we acquire the following three methods exactly preserving the LDP for $\{A_T\}_{T>0}$ and their coefficients are separately 
\begin{equation} \label{M1}
A^{[1]}=\left(\begin{array}{cc}
1-h^2&h\\
-h&1
\end{array}\right), \qquad
b^{[1]}=\left(\begin{array}{c}
h/2\\1
\end{array}\right);
\end{equation}
\begin{equation} \label{M2}
A^{[2]}=\left(\begin{array}{cc}
1-h^2/2&h+h^2/2\\
-h+h^2/2&1-h^2/2
\end{array}\right), \qquad
b^{[2]}=\left(\begin{array}{c}
h/2\\1-h/2
\end{array}\right);
\end{equation}
\begin{equation} \label{M3}
A^{[3]}=\left(\begin{array}{cc}
1-h^2/2&h-h^2/2\\
-h-h^2/2&1-h^2/2
\end{array}\right), \qquad
b^{[3]}=\left(\begin{array}{c}
h/2\\1+h/2
\end{array}\right).
\end{equation}
Moreover, if $h\in(0,2)$, methods based on \eqref{M1}, \eqref{M2} and \eqref{M3} satisfy the assumptions $\mbf{(A1)}$ and $\mbf{(A2)}$,  and  have the same modified rate function $I^h_{mod}(y)=\frac{y^2}{3\alpha^2}=I(y)$.
\\

$\bullet$ Methods exactly preserving the LDP for $\{B_T\}_{T>0}$:

We still consider the method with coefficients satisfying \eqref{5method1}.  By the straightforward computation, we get the following  methods exactly preserving the LDP for $\{B_T\}_{T>0}$, whose coefficients are
\begin{equation*} 
A=\left(\begin{array}{cc}
1-\frac{1+\sqrt{1-4\sigma^2}}{2}h^2&h+\sigma h^2\\
-h+\sigma h^2&1-\frac{1-\sqrt{1-4\sigma^2}}{2}h^2
\end{array}\right), \qquad
b=\left(\begin{array}{c}
h/2\\1-\sigma h
\end{array}\right),
\end{equation*}
with $\sigma=0,\pm\frac{1}{2}$, or
\begin{equation*} 
A=\left(\begin{array}{cc}
1-\frac{1-\sqrt{1-4\sigma^2}}{2}h^2&h+\sigma h^2\\
-h+\sigma h^2&1-\frac{1+\sqrt{1-4\sigma^2}}{2}h^2
\end{array}\right), \qquad
b=\left(\begin{array}{c}
-h/2\\1-\sigma h
\end{array}\right),
\end{equation*}
with $\sigma=0,\pm\frac{1}{2}$.
Finally, besides methods based on \eqref{M1}, \eqref{M2} and \eqref{M3}, we obtain three more methods exactly preserving the LDP for $\{B_T\}_{T>0}$ with coefficients given by
\begin{equation} \label{M4}
A^{[4]}=\left(\begin{array}{cc}
1&h\\
-h&1-h^2
\end{array}\right), \qquad
b^{[4]}=\left(\begin{array}{c}
-h/2\\1
\end{array}\right);
\end{equation}
\begin{equation} \label{M5}
A^{[5]}=\left(\begin{array}{cc}
1-h^2/2&h+h^2/2\\
-h+h^2/2&1-h^2/2
\end{array}\right), \qquad
b^{[5]}=\left(\begin{array}{c}
-h/2\\1-h/2
\end{array}\right);
\end{equation}
\begin{equation} \label{M6}
A^{[6]}=\left(\begin{array}{cc}
1-h^2/2&h-h^2/2\\
-h-h^2/2&1-h^2/2
\end{array}\right), \qquad
b^{[6]}=\left(\begin{array}{c}
-h/2\\1+h/2
\end{array}\right).
\end{equation}
In fact, it is verified that methods based on \eqref{M1}, \eqref{M2}, \eqref{M3}, \eqref{M4}, \eqref{M5} and \eqref{M6} satisfy the assumptions $\mbf{(A1)}$ and $\mbf{(A2)}$  for $h\in(0,2)$ and  have the same modified rate function $J^h_{mod}(y)=\frac{y^2}{\alpha^2}=J(y)$.
\begin{rem}
	Note that three symplectic methods constructed based on \eqref{M4}， \eqref{M5} and \eqref{M6} preserve exactly  the LDP for $\{A_T\}_{T>0}$ and $\{B_T\}_{T>0}$ at the same time.
\end{rem}

\section{Conclusions and future aspects}\label{Sec7}

In this paper, in order to evaluate the ability of the numerical method to preserve the large deviations rate functions associated with the general stochastic Hamiltonian systems, we propose the concept of asymptotical preservation for LDPs. 
It is shown that stochastic symplectic methods applied to the stochastic test equation, that is, the linear stochastic oscillator, asymptotically preserve the LDPs for $\{A_T\}_{T>0}$ and $\{B_T\}_{T>0}$, but non-symplectic ones do not.
This indicates the probabilistic superiority of stochastic symplectic methods.
In fact, there are still many problems of interest which remain  to be solved. We list some possible aspects for future work.
\begin{itemize}
	\item[(1)] Can the stochastic  symplectic methods asymptotically preserve the LDPs for all observables associated with the linear stochastic oscillator?
	\item[(2)] Can stochastic  symplectic methods asymptotically preserve the LDPs for observables associated with the general stochastic Hamiltonian system which is driven by multiplicative noises or in higher dimension?
	\item[(3)] For a stochastic Hamiltonian partial differential equation which possesses the symplectic or multi-symplectic structure, such as stochastic Schr\"odinger equation, 
	does the symplectic or multi-symplectic numerical methods asymptotically preserve the LDP of the original system?
\end{itemize}
These problems are very challenging. Because the large deviations rate functions do not generally  have explicit expression for more complex SDEs and their numerical solutions, it is difficult to  analyze the asymptotical behaviour of rate functions of numerical methods. In addition, the  large deviations estimates on infinite dimensional Banach spaces are more involved. We leave these problems as the open problems, and attempt to study them in our future work.

\section*{Appendix}
\textbf{A. Proof of Lemma \ref{lem1}}.
\begin{proof}
	Using the fact $\sin(n\theta)=\frac{1}{2\bm{i}}\left(e^{\bm{i}n\theta}-e^{-\bm{i}n\theta}\right)$, one immediately has
	\begin{align}
	\sum_{n=1}^{N}\sin(n\theta)a^n
	=\frac{a\sin(\theta)-a^{N+1}\sin((N+1)\theta)+a^{N+2}\sin(N\theta)}{1-2a\cos(\theta)+a^2}.\nonumber
	\end{align}
	
	For $a=1$, utilizing the formula $\sin(\alpha)-\sin(\beta)=2\cos(\frac{\alpha+\beta}{2})\sin(\frac{\alpha-\beta}{2})$ gives
	\begin{align}
	\sum_{n=1}^{N}\sin(n\theta)=&\frac{\sin(\theta)-\sin((N+1)\theta)+\sin(N\theta)}{2(1-\cos(\theta))} 
	=\frac{\cos\left(\frac{\theta}{2}\right)-\cos((N+\frac{1}{2})\theta)}{2\sin\left(\frac{\theta}{2}\right)}, \nonumber
	\end{align}
	which completes the proof.
\end{proof}

\textbf{B. Proof of Lemma \ref{lem3.2}}.
\begin{proof}
	(1) Assume that $b_1^2+(\ab)^2=0$, i.e., $b_1=\ab=0$. Noting that $b_1^2+b_2^2\neq0$, one has $b_2\neq0$, which leads to $a_{12}=0.$ Since $\det(A)=a_{11}a_{22}-a_{12}a_{21}=1$, $a_{11}a_{22}=1$. Hence $a_{11},a_{22}>0$ or $a_{11},a_{22}<0$. It follows from assumptions $\mbf{(A1)}$ and $\mbf{(A2)}$ that $-2<\tr(A)<2$. In this way, $|\tr(A)|=|a_{11}|+|a_{22}|<2$. This is contradictory  to $|a_{11}a_{22}|=1$, since $1=\sqrt{|a_{11}a_{22}|}\leq\frac{1}{2}\left(|a_{11}|+|a_{22}|\right)<1.$ This proves the first conclusion.
	
	(2) Denote $S:=(b_1+\ab)^2(4+\tr(A))-2b_1(\ab)(2-\tr(A))$, $p:=b_1$ and $q:=\ab$. Then $S=(p+q)^2(4+\tr(A))-2pq(2-\tr(A))=\tr(A)\left((p+q)^2+2pq\right)+4(p+q)^2-4pq$. By studying the infimum of $S$ in three kinds of cases: $(p+q)^2+2pq>0$, $(p+q)^2+2pq<0$ and $(p+q)^2+2pq=0$, one can prove that $S>0$.	
	%
	%
\end{proof}

\textbf{C. Proof of Theorem \ref{tho4.1}}.
\begin{proof}
	Denote $Z_t=(X_t,Y_t)$, $J=\left(
	\begin{array}{cc}
	0 & 1\\
	-1& 0
	\end{array}
	\right)$, $K=\left(
	\begin{array}{cc}
	0 \\
	1
	\end{array}
	\right)$. We rewrite \eqref{Eq1} as
	$\ud Z_t=JZ_t\ud t+\alpha K\ud W_t.$
	Let $Z$ be the solution of the above equation at  $t+h$, with the deterministic value $z$ at  $t$. Note that for any $u>v\geq0$, 
	$		Z_u=Z_v+J\int_{v}^{u}Z_r\ud r+\alpha K\int_{v}^{u}\ud W_r.$
	Using the above formula, one can show that 
	\begin{align}\label{sec7k3}
	Z
	=&z+hJz+\alpha K(W_{t+h}-W_t)+J^2\int_{t}^{t+h}\int_{t}^{s}Z_r\ud r\ud s+\alpha JK\int_{t}^{t+h}\int_{t}^{s}\ud W_r\ud s\nonumber\\
	=&A^{EM}z+\alpha b^{EM}(W_{t+h}-W_t)+R,
	\end{align}
	where $R:=J^2\int_{t}^{t+h}\int_{t}^{s}Z_r\ud r\ud s+\alpha JK\int_{t}^{t+h}\int_{t}^{s}\ud W_r\ud s$ with $\|\mbf ER\|_2\leq Ch^2$ and $\mbf E\|R\|^2_2\leq Ch^3$. 
	Further, the one-step approximations based on  the method \eqref{Mthd} is 
	$	\widehat {Z}=Az+\alpha b(W_{t+h}-W_t).$
	In this way, we obtain
	\begin{equation} \label{err1}
	\left\|\mbf E(\widehat{Z}-Z)\right\|_2\leq C\left\|A-A^{EM}\right\|_F\left\|z\right\|_2+\|\mbf ER\|_2\leq Ch^2,
	\end{equation}
	where the second equality uses the equivalence of norms in finite-dimensional normed linear spaces.
	In addition, it holds that
	\begin{align} \label{err2}
	\mbf E\left\|\widehat{Z}-Z\right\|^2_2\leq C\left\|A-A^{EM}\right\|^2_F\left\|z\right\|^2_2+C\alpha^2\left\|b-b^{EM}\right\|^2_2\mbf E(\DW^2)+C\mbf E\|R\|^2_2
	\leq Ch^3.
	\end{align}
	It follows from \eqref{err1}, \eqref{err2} and \cite[Theorem 1.1]{MilsteinBook} that the mean-square order of numerical method
	\eqref{Mthd} is at least $1$.
\end{proof}

\textbf{D. Proof of Lemma \ref{lem4.3}}.
\begin{proof}
	If $\mbf{(B)}$ holds, then $a_{11}=1+\mcal O(h^2)$, $a_{22}=1+\mcal O(h^2)$. Thus, $\mrm{tr}(A)=2+\mcal O(h^2)$, which leads to the assertion $(1)$.
	Further,
	$1-\mrm{tr}(A)+\det(A)
	=(a_{11}-1)(a_{22}-1)-a_{12}a_{21}.$
	Noting that $a_{12}\sim h$ and $a_{21}\sim-h$, one has $\left(1-\mrm{tr}(A)+\det(A)\right)\sim h^2$. 
	Finally, since 
	$\lim_{h\to 0}\frac{a_{12}b_2}{h}=\lim_{h\to 0}\frac{(a_{12}-h)(b_2-1)+h(b_2-1)+a_{12}}{h}=1,$
	it holds that
	$\lim_{h\to 0}\frac{b_1+\ab}{h}=\lim_{h\to 0}\frac{a_{12}b_2}{h}+\lim_{h\to 0}\frac{b_1(1-a_{22})}{h}=1
	$,
	which is nothing but the assertion (3).
\end{proof}

\textbf{E. Proof of Lemma \ref{Sec5lem1}}.
\begin{proof}
	It follows form Lemma \ref{lem3.2}(1) that $b_1^2+(\ab)^2\neq0$. Denote $T=(b_1+\ab)^2-b_1(\ab)(2-\tr(A))$.
	Then $T=b_1^2+(\ab)^2+b_1(\ab)\tr(A)$.	Next we show that $T>0$.
	
	\emph{Case 1: $b_1=0$ or $\ab=0$.} This associated with $b_1^2+(\ab)^2\neq0$ immediately leads to $T>0.$
	
	\emph{Case 2: $b_1\neq0$ and $\ab\neq0$.}
	We note that under assumptions $\mbf{(A1)}$ and  $\mbf{(A2)}$, $-2<\tr(A)<2$. If $\tr(A)=0$, $T>0$ holds naturally. If $\tr(A)\neq0$, then $0<|\tr(A)|<2$. As a result, 
	$\left|b_1(\ab)\tr(A)\right|<2\left|b_1(\ab)\right|\leq b_1^2+(\ab)^2.$	
	Hence,  to sum up, $T>0$.
\end{proof}

\section*{Acknowledgements}
We would like to thank Prof. Xia Chen for lecturing courses on LDP in the summer of 2018.

%

\bibliography{Mybibfile}
\bibliographystyle{plain}

\end{document}